\newcommand{\calP}{\mathcal{P}}
\renewcommand{\P}{\mathcal{P}}
\newcommand{\Mr}{\mathcal{M}_{\textbf{r}}}
\newcommand{\rank}{\mathrm{rank}}
\newcommand{\D}{\mathrm{D}}
\newcommand{\De}{\emph{D}}
\newcommand{\Xt}{\widetilde{X}}
\newcommand{\Ut}{\widetilde{U}}
\newcommand{\reals}{\mathbb{R}}
\newcommand{\R}{\mathrm{R}}
\newcommand{\inner}[2]{\left\langle{#1},{#2}\right\rangle}
\newcommand{\T}{\mathrm{T}}
\newcommand{\grad}{\mathrm{grad}}
\newcommand{\Hess}{\mathrm{Hess}}
\title[Second-order optimization for tensors with fixed tensor-train rank]{Second-order optimization for tensors with fixed tensor-train rank}
 \let\Ginclude@graphics\@org@Ginclude@graphics 
\begin{document}

\maketitle

\begin{abstract}%
There are several different notions of ``low rank'' for tensors, associated to different formats. Among them, the Tensor Train (TT) format is particularly well suited for tensors of high order, as it circumvents the curse of dimensionality: an appreciable property for certain high-dimensional applications. It is often convenient to model such applications as optimization over the set of tensors with fixed (and low) TT rank. That set is a smooth manifold. Exploiting this fact, others have shown that Riemannian optimization techniques can perform particularly well on tasks such as tensor completion and special large-scale linear systems from PDEs. So far, however, these optimization techniques have been limited to first-order methods, likely because of the technical hurdles in deriving exact expressions for the Riemannian Hessian. In this paper, we derive a formula and efficient algorithm to compute the Riemannian Hessian on this manifold. This allows us to implement second-order optimization algorithms (namely, the Riemannian trust-region method) and to analyze the conditioning of optimization problems over the fixed TT rank manifold. In settings of interest, we show improved optimization performance on tensor completion compared to first-order methods and alternating least squares (ALS). Our work could have applications in training of neural networks with tensor layers. Our code is freely available.
\end{abstract}


\section{Introduction}

Tensors of order $d$ are multi-dimensional arrays with some size $n_1 \times \cdots \times n_d$.
They occur in numerous applications, sometimes as the unknown in an optimization problem.
Aside from training neural networks, examples include predicting gene expression (e.g. \citep{iwata2019genomics}) and solving differential equations (e.g. \citep{sergey2019ode}).

In the same way that optimizing over large matrices (tensors of order two) may be challenging, so optimizing over large tensors requires care.
When optimizing over matrices, it is often the case that one can meaningfully restrict attention to matrices of a given low rank $r$.
This may be either because the solution of the problem genuinely is a matrix of rank $r$, or because it can be well approximated by one.
When it comes to tensors, there exist several notions of low rank, with their pros and cons.
We focus on the notion of rank associated with the \emph{tensor train} (TT) format (also known as \emph{matrix product state} (MPS) in the physics community).

The \emph{TT-rank} of a tensor $X$ of order $d$ is a tuple of integers: $\rank_{\mathrm{TT}}\mathrm{(}X\mathrm{)} = \mathbf{r} = \left(r_1, \ldots, r_{d-1}\right)$.
For $d = 2$, it reduces to the usual notion of matrix rank.
We consider optimization problems over the set
\begin{align*}
\Mr & = \{ X \in \reals^{n_1 \times \cdots \times n_d} : \rank_{\mathrm{TT}}\mathrm{(}X\mathrm{)} = \mathbf{r} \}.
\end{align*}
As reviewed below, this set can be endowed with the structure of a Riemannian submanifold of the Euclidean space $\reals^{n_1 \times \cdots \times n_d}$.
This makes it possible to use general techniques from Riemannian optimization~\citep{AMS08} to minimize functions on $\Mr$.

Other authors have exploited the Riemannian structure of $\Mr$ to design first-order optimization algorithms such as gradient descent and certain quasi-Newton schemes~\citep{steinlechner2016riemannian,uschmajew2020geometrylowrank}.
However, no second-order optimization algorithms on $\Mr$ have been implemented yet. Having access to the Riemannian Hessian, we can expect to see superlinear local convergence of algorithms such as the Riemannian trust-region method~\citep{genrtr}. This is manifest in our numerical experiments.

Motivated by the role of tensors in modern machine learning applications (including as a means to encode weights in layers of neural networks) and by the recently revived interest in second-order methods for machine learning tasks,\footnote{See for example the program of the NeurIPS 2019 workshop ``Beyond first order methods in machine learning systems,'' \url{https://sites.google.com/site/optneurips19/}.} in this paper we derive the geometric and numerical tools necessary to implement second-order optimization algorithms on $\Mr$. In particular, we implement the tools necessary to use Riemannian trust-region methods~\citep{genrtr} in the Manopt toolbox~\citep{manopt}. The main ingredient is an efficient procedure to evaluate Riemannian Hessians.

To illustrate the benefits of second-order optimization algorithms on $\Mr$, we experiment with low-rank tensor completion, analogous to the well-known low-rank matrix completion problem.

An alternative encoding for tensors is the \emph{Tucker format}, with its associated notion of Tucker or multilinear rank.
The set of tensors with fixed Tucker rank is also a manifold.
Second-order methods for optimization over that manifold are developed in~\citep{heidel2018riemannian}.
For high-order tensors, the strength of the TT format is that the dimension of $\Mr$ grows linearly in $d$, whereas the dimension of the manifold of fixed Tucker rank grows exponentially in $d$ (the base of this exponential growth is the rank, not the size of the tensors).
The canonical polyadic (CP) format also escapes the curse of dimensionality, but the set of tensors with fixed CP rank is difficult to handle for optimization~\citep[\S9.3]{uschmajew2020geometrylowrank}.

\section{Tensor train format}




Suppose we can factor a tensor $X$ of order $d$ and size $n_1 \times \cdots \times n_d$ into the following form:
\begin{equation}\label{TT_definition}
    X(i_1, \ldots, i_d) = U_1(i_1)U_2(i_2) \cdots U_d(i_d)
\end{equation}
where each $U_\mu(i_\mu) \in \mathbb{R}^{r_{\mu - 1} \times r_\mu}$ is a small matrix for some integers $r_0, \ldots, r_d$. Since $X(i_1, \ldots, i_d)$ is a scalar, we necessarily have $r_0 = r_d = 1$: we collect the remaining sizes in the tuple $\textbf{r} = (r_1, \ldots, r_{d-1})$. For each $\mu \in [d] := \{1, \ldots, d\}$, we stack all of the matrices $U_\mu(1), \ldots, U_\mu(n_\mu)$ together to form a third-order tensor $U_\mu \in \mathbb{R}^{r_{\mu - 1} \times n_\mu \times r_\mu}$. These third-order tensors are called the \textit{cores}, and the set of cores $\{U_1, \ldots, U_d\}$ form a \textit{tensor train decomposition} of $X$. The size $\mathbf{r}$ of the decomposition is $(r_1, \ldots, r_{d-1})$, a vector closely related to the \textit{TT-rank} defined below. A tensor decomposed into this format is a \textit{TT-tensor} \citep{oseledets2011tensortrain}. For example, a TT-tensor of order $d$ and size $n \times \cdots \times n$ with ``constant'' $\mathbf{r} = (r, \ldots, r)$ is fully specified by $O(r^2 n d)$ real numbers (as opposed to $n^d$ in full generality): the linear scaling in $d$ (as opposed to exponential) is how this format escapes the curse of dimensionality \citep{oseledets2009curseofdim}. Of course, the decomposition is not unique.


We define the TT-rank of a tensor $Z \in \mathbb{R}^{n_1 \times \cdots \times n_d}$ by the following:
\begin{align*}
    \rank_\mathrm{TT}(Z) &:= \left(\rank(Z^{<1>}), \ldots, \rank(Z^{<d-1>})\right),
\end{align*}
where each $Z^{<k>}$ is a so-called \emph{flattening} of the tensor into a matrix (see Def. \ref{flattening_def} in Appendix \ref{prelim_defs}).

The following theorem provides support for the latter definition. In essence, it states that $Z$ admits a TT-decomposition with size $\mathbf{r} = \rank_{\mathrm{TT}}(Z)$, but not less. See \cite[Thm. 9.2]{uschmajew2020geometrylowrank} for a proof.
\begin{theorem}\label{minimal_decomp}
Let $Z \in \mathbb{R}^{n_1 \times \cdots \times n_d}$, $Z \ne 0$. Denote $\tilde{r_k} := \rank(Z^{<k>})$. For any tensor train decomposition $U_1, \ldots, U_d$ of size $\mathbf{r} = (r_1, \ldots, r_{d-1})$, it necessarily holds that $r_k \ge \tilde{r_k}$ for all $k \in [d-1]$, and it is furthermore possible to obtain a decomposition such that equality holds.
\end{theorem}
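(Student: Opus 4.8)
I want to prove two things: (a) a lower bound $r_k \ge \tilde r_k$ for every TT decomposition, and (b) achievability of equality. The key object connecting flattenings and TT cores is the following identity: if $U_1, \ldots, U_d$ is a TT decomposition of $Z$, then for each $k \in [d-1]$ the flattening $Z^{<k>}$ factors as a product of a matrix built from $U_1, \ldots, U_k$ (with row index $(i_1,\ldots,i_k)$ and column index in $[r_k]$) and a matrix built from $U_{k+1},\ldots,U_d$ (with row index in $[r_k]$ and column index $(i_{k+1},\ldots,i_d)$). This is immediate from \eqref{TT_definition}: splitting the matrix product $U_1(i_1)\cdots U_d(i_d)$ after the $k$-th factor, the "left part" $U_1(i_1)\cdots U_k(i_k)$ is a row vector in $\reals^{1\times r_k}$ indexed by $(i_1,\ldots,i_k)$, and the "right part" $U_{k+1}(i_{k+1})\cdots U_d(i_d)$ is a column vector in $\reals^{r_k \times 1}$. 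Stacking these over all multi-indices gives $Z^{<k>} = L_k R_k$ with $L_k \in \reals^{(n_1\cdots n_k)\times r_k}$ and $R_k \in \reals^{r_k\times(n_{k+1}\cdots n_d)}$.

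Part (a) then follows instantly from subadditivity/submultiplicativity of rank: $\tilde r_k = \rank(Z^{<k>}) = \rank(L_k R_k) \le \min(\rank L_k, \rank R_k) \le r_k$, since $L_k$ has only $r_k$ columns. So every TT decomposition has size at least $\rank_{\mathrm{TT}}(Z)$ componentwise.

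For part (b), the plan is to construct a decomposition achieving equality by a sequential (left-to-right) low-rank factorization, exactly the "TT-SVD"–style construction. Start with $Z^{<1>} \in \reals^{n_1 \times (n_2\cdots n_d)}$, which has rank $\tilde r_1$; write $Z^{<1>} = U_1 M_1$ with $U_1 \in \reals^{n_1 \times \tilde r_1}$ of full column rank and $M_1 \in \reals^{\tilde r_1 \times (n_2\cdots n_d)}$ (e.g. via a rank-revealing factorization or SVD). Reshape $M_1$ as a tensor of order $d$ with first dimension $\tilde r_1$, flatten it after its second index, factor again, and so on. The core $U_1$ becomes the first TT core (with $r_0 = 1$). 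The technical crux — and the step I expect to be the main obstacle — is showing that at stage $k$ the matrix being factored has rank exactly $\tilde r_k$, not something larger: i.e., that taking the low-rank factorization at step $k{-}1$ has not "inflated" the rank seen at step $k$. This requires relating $\rank$ of the flattening of the reduced tensor $M_{k-1}$ to $\rank(Z^{<k>})$. The clean way to see it: the left factor accumulated so far, $U_1(i_1)\cdots U_{k-1}(i_{k-1})$ stacked into a matrix $L \in \reals^{(n_1\cdots n_{k-1}) \times \tilde r_{k-1}}$, has full column rank $\tilde r_{k-1}$ by construction; and $Z^{<k-1>} = L\cdot(\text{reshape of }M_{k-1})^{<1>}$ combined with $\rank(Z^{<k-1>}) = \tilde r_{k-1}$ forces the reshaped $M_{k-1}$ to have its own first-flattening of rank exactly $\tilde r_{k-1}$; then a short argument (again using that $L$ has full column rank, so left-multiplication by $L$ preserves rank of any flattening that keeps the first group of indices intact) shows $\rank\big((M_{k-1})^{<j>}\big) = \rank(Z^{<j>}) = \tilde r_j$ for all $j \ge k-1$ after appropriate re-indexing. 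Granting this invariant, each step produces a core of the correct size $\tilde r_{k-1} \times n_k \times \tilde r_k$, and after $d$ steps we obtain a TT decomposition of $Z$ with size exactly $\big(\tilde r_1, \ldots, \tilde r_{d-1}\big) = \rank_{\mathrm{TT}}(Z)$, as desired.

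Since a full proof appears in \cite[Thm. 9.2]{uschmajew2020geometrylowrank}, the above construction plus the rank-invariance bookkeeping is all that is needed; the bookkeeping is the only place where care is required, everything else being linear algebra on reshaped arrays.
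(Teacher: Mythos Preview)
Your proposal is correct, and it is the standard argument: the lower bound via $Z^{<k>} = L_k R_k$ with $L_k$ having $r_k$ columns, and achievability via the sequential rank-revealing factorization (TT-SVD) with the rank-invariance invariant you describe. Note, however, that the paper does not actually give its own proof of this statement; it simply refers the reader to \cite[Thm.~9.2]{uschmajew2020geometrylowrank}, whose proof is precisely the construction you outline.
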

Based on the latter statement, we say a TT decomposition of a tensor is \emph{minimal} if the sizes of the cores match the TT-rank of the tensor.

\section{Smooth manifold structure}
We give a concise overview of the geometry of $\Mr$ restricted to properties useful for optimization.
For book-length introductions to the topic of Riemannian optimization, we direct the reader to~\citep{AMS08,boumal2020intromanifolds}.
For a full treatment of the geometry of fixed TT-rank tensors specifically, we recommend~\citep{kressner2013tensors,steinlechner2016riemannian,uschmajew2020geometrylowrank}.

The set of tensors of size $n_1 \times \cdots \times n_d$ and fixed TT-rank $\mathbf{r} = (r_1, \ldots, r_{d-1})$,
\begin{align*}
	\Mr & = \{ X \in \mathbb{R}^{n_1 \times \cdots \times n_d} : \rank_{\mathrm{TT}}(X) = \mathbf{r} \},
\end{align*}
is a subset of $\mathbb{R}^{n_1 \times \cdots \times n_d}$.
In the same way that the set of matrices of size $m \times n$ and rank $r$ is smoothly embedded in $\mathbb{R}^{m \times n}$, $\Mr$ is a smooth embedded submanifold of $\mathbb{R}^{n_1 \times \cdots \times n_d}$ of dimension 
\begin{align}
   \dim \Mr = \sum_{i=1}^d r_{i-1}n_i r_i - \sum_{i=1}^{d-1} r_i^2.
\end{align}
This means that around each point $X \in \Mr$ we can define a linearization of $\Mr$ called the \emph{tangent space} at $X$.
This is a linear subspace $\T_X\Mr$ of $\mathbb{R}^{n_1 \times \cdots \times n_d}$ which consists in all the ``vectors'' (in fact, tensors) of the form $c'(0)$ where $c(t)$ is a smooth curve in $\mathbb{R}^{n_1 \times \cdots \times n_d}$ which lies entirely on $\Mr$ and passes through $X$ so that $c(0) = X$. Explicitly, given a minimal left-orthogonal decomposition $U_1, \ldots, U_d$ of $X$ (left-orthogonal is defined in Appendix \ref{orthogonal_decomp}), we can parametrize a tangent vector $V$ at $X$ by tensors $\delta V_1, \ldots, \delta V_d$ of the same shape as $U_1, \ldots, U_d$  such that
\begin{align}\label{variation_init_def}
    V(i_1, \ldots, i_d) &= \sum_{k=1}^d U_1(i_1) \cdots U_{k-1}(i_{k-1}) \delta V_k(i_k) U_{k+1}(i_{k+1}) \cdots U_d(i_d)
\end{align}
and $(\delta V_i^L)^\top U_i^L = 0$ for $i \in [d-1]$, where $U_i^L := (U_i)^{<2>}$ and $\delta V_i^L := (\delta V_i)^{<2>}$  (see Def. \ref{LR-flattening} of Appendix \ref{prelim_defs}). The space $\reals^{n_1 \times \cdots \times n_d}$ has the usual Euclidean inner product
\begin{align*}
	\inner{V}{W} & = \sum_{i_1, \ldots, i_d} V(i_1, \ldots, i_d) W(i_1, \ldots, i_d).
\end{align*}
We equip each tangent space with the same inner product simply by restricting the domain.
This turns $\Mr$ into a Riemannian manifold; specifically: a Riemannian submanifold of $\mathbb{R}^{n_1 \times \cdots \times n_d}$.

A function $f \colon \Mr \to \mathbb{R}$ is smooth if and only if it is the restriction of a smooth function $\bar f$ defined on a neighborhood of $\Mr$ in $\mathbb{R}^{n_1 \times \cdots \times n_d}$. The Riemannian structure affords us a notion of gradient and Hessian for $f$, of central importance for optimization.
Specifically, the \emph{Riemannian gradient} of $f$ at $X \in \Mr$ is the (unique) tangent vector $\grad f(X) \in \T_X\Mr$ such that
\begin{align*}
	\forall V \in \T_X \Mr, && \inner{\grad f(X)}{V} & = \lim_{t \to 0} \frac{\bar f(X + tV) - \bar f(X)}{t}.
\end{align*}
It can be shown that this does not depend on the choice of smooth extension $\bar f$.
If $\calP_X$ denotes the orthogonal projector from $\mathbb{R}^{n_1 \times \cdots \times n_d}$ to $\T_X \Mr$, it is easy to verify that
\begin{align*}
	\grad f(X) & = \calP_X\!\left( \partial \bar f(X) \right),
\end{align*}
where $\partial \bar f(X)$ is the (classical) gradient of $\bar f$ at $X$.
Provided $\partial \bar f(X)$ is sufficiently structured, this can be computed efficiently.

A \emph{retraction} is a smooth map on the tangent bundle which provides maps $\R_X \colon \T_X\Mr \to \Mr$ such that $c(t) = \R_X(tV)$ is a smooth curve on $\Mr$ satisfying $c(0) = X$ and $c'(0) = V$.
For example, a computationally favorable choice for $\Mr$ is the \textit{TT-SVD}~\citep{oseledets2011tensortrain},\cite[\S 9.3.4]{uschmajew2020geometrylowrank}.


Combined, the tools described here are sufficient to develop first-order optimization methods on $\Mr$, including Riemannian gradient descent and even some quasi-Newton methods \citep{steinlechner2016riemannian}. However, to implement true-to-form second-order optimization methods, we also need access to the Riemannian Hessian: this is our main object of study. But first, we need a second look at tangent vectors.

\paragraph{An alternative parametrization of $\T_X \Mr$.}
Orthogonal projections and inner products of tangent vectors are computed frequently in optimization algorithms, so it is key to have a parametrization of $\T_X\Mr$ that yields efficient computation of both. The following parametrization of $\T_X\Mr$ was first proposed in \citep{khoromskij2012ttmanifold}, and further elaborated on in \citep{steinlechner2016riemannian}. Given a tangent vector represented by $\delta V_1, \ldots, \delta V_d$ (as in eq. \eqref{variation_init_def}), we can generate another representation $\delta \widetilde{V}_1, \ldots, \delta \widetilde{V}_d$  such that $(\delta \widetilde{V}_i^L)^\top U_i^L = 0$ for $i \in [d-1]$ and such that the tangent vector $V$ is given by:
\begin{align}\label{param_alt}
	V(i_1, \ldots, i_d) &= \sum_{k=1}^d U_1(i_1) \cdots U_{k-1}(i_{k-1}) \delta \widetilde{V}_k(i_k) \widetilde{U}_{k+1}(i_{k+1}) \cdots \widetilde{U}_d(i_d)
\end{align}
where $\{\widetilde{U}_k \}$ are the right-orthogonalized cores from $\{U_k \}$ (see Appendix \ref{orthogonal_decomp}). Conversely, we can also recover $\delta V_1, \ldots, \delta V_d$ from $\delta \widetilde{V}_1, \ldots, \delta \widetilde{V}_d$. The inner product of two tangent vectors $V$, $W$ with parametrizations $\delta \widetilde{V}_1, \ldots, \delta \widetilde{V}_d$, and $\delta \widetilde{W}_1, \ldots, \delta \widetilde{W}_d$ admits a convenient expression:
\begin{align*}
    \inner{V}{W} &= \sum_{i=1}^d \inner{\delta \widetilde{V}_i}{\delta \widetilde{W}_i}.
\end{align*}
This is computable in $O(dnr^2)$ flops, where $r := \max_\mu r_\mu$ and $n := \max_\mu n_\mu$. Importantly for our purpose, the tangent space can be decomposed into $d$ orthogonal subspaces \citep{steinlechner2016riemannian}, so we can decompose the orthogonal projector $\P_X$ into $d$ orthogonal components:
\begin{align}\label{proj_split}
    \calP_X = \P_X^1 + \cdots + \P_X^d
\end{align}
where $\P_X^1, \ldots, \P_X^d$ are the projectors to the orthogonal subspaces and are given by the following:
\begin{align}\label{proj_explicit}
    (\P_X^k(Z))^{<k>} = (I_{n_k} \otimes X_{\le k-1})\left( (I_{n_kr_{k-1}} - U_k^L(U_k^L)^\top)(I_{n_k} \otimes X_{\le k-1}^\top)Z^{<k>}\Xt_{\ge k+1}\right)\Xt_{\ge k+1}^\top
\end{align}
for $k \in [d-1]$, and
\begin{align}\label{proj_d}
    (\P_X^d(Z))^{<d>} = (I_{n_d} \otimes X_{\le d-1})(I_{n_d} \otimes X_{\le d-1}^\top) Z^{<d>},
\end{align}
where $X_{\le k-1}$, $\Xt_{\ge k+1}$ are so-called \textit{interface matrices} from decompositions $\{U_k\}$ and $\{\widetilde{U}_k\}$ respectively (see Def. \ref{flattening_def} in Appendix \ref{prelim_defs}). We use parametrization $\{\delta V_k\}$ to represent tangent vectors in differentials, and $\{\delta \widetilde{V}_k\}$ for the resulting tangent vector after orthogonal projection to the tangent space. We prove interchangeability between these parametrizations in Lemma \ref{interchange_tangent_param} of Appendix \ref{alternative}.


\section{Riemannian Hessian}
The Riemannian Hessian of $f$ at $X$---a symmetric operator to and from $\T_X\Mr$---admits an explicit expression in terms of the Euclidean derivatives of $\bar f$ at $X$.
It is shown in~\citep{absil2013extrinsic} for general Riemannian submanifolds that, for all $V \in \T_X\Mr$,
\begin{align}\label{riemannian_hessian}
	\Hess f(X)[V] & = \P_X \partial^2 \bar{f}(X)[V] + \P_X (\D_V \P_X) \partial \bar{f}(X).
\end{align}
A few comments are in order. For the first term, $\partial^2 \bar{f}(X)[V]$ is the Euclidean Hessian of $\bar f$ at $X$ along $V$, the result of which is then projected to $\T_X\Mr$ through $\P_X$.
The second term is a ``correction term'' in the sense that it modifies the (projected) Euclidean Hessian to capture the Riemannian geometry of $\Mr$.
The notation $\D_V\P_X$ denotes the differential of the map $X \mapsto \P_X$ at $X$ along the direction $V$, so that
\begin{align*}
	(\D_V \P_X) Z =\D_V(\calP_X Z) = \lim_{t \to 0} \frac{\P_{c(t)}(Z) - \P_{c(0)}(Z)}{t},
\end{align*}
where $c(t)$ is any smooth curve on $\Mr$ such that $c(0) = X$ and $c'(0) = V$.
In words: it is the derivative of the orthogonal projector to $\T_X\Mr$ as we perturb $X$ along the tangent direction $V$.
As such, $\D_V\P_X$ is itself a linear operator from $\mathbb{R}^{n_1 \times \cdots \times n_d}$ to $\mathbb{R}^{n_1 \times \cdots \times n_d}$.
As shown in~\citep{absil2013extrinsic}, the correction term depends only on the normal component of $\partial \bar{f}(X)$.
Moreover, the operation which maps a tangent vector $V$ and a normal vector $N$ to the tangent vector $\P_X (\D_V\P_X) N$ is the \emph{Weingarten map}: a standard object in geometry.

Note that splitting $\P_X = \sum_{i=1}^d \P_X^i$ as in eq. \eqref{proj_split} allows us to rewrite the correction term as:
\begin{align}\label{eq_split_correction}
    \P_X (\D_V \P_X) (Z) & = \sum_{k=1}^d \P_X^k (\D_V \P_X^k) (Z) + \sum_{i = 1}^d \sum_{j = 1, j \neq i}^d \P_X^i (\D_V \P_X^j) (Z).
\end{align}
The double sum would seem to take too many flops to compute. However, we show in this paper an expression for these ``cross-terms'' $\P_X^i (\D_V \P_X^j) (Z), i \ne j$ in a way that yields a computation of the whole double sum in virtually no extra flops after computing $\sum_{k=1}^d \P_X^k (\D_V \P_X^k) (Z)$, which we do efficiently.

We now present the main contribution of this paper: simplified formulas for the correction term that yield computationally efficient algorithms for the Hessian. Proofs for these formulas can be found in Appendix \ref{proofs_correction}, and time complexity analyses can be found in Appendix \ref{proofs_complexity}. See Appendix \ref{orthogonal_decomp} for definitions of the small invertible matrices $R_k \in \reals^{r_k \times r_k}$ and matrices $\Xt_{\ge k+1} \in \reals^{n_{k+1}\cdots n_d \times r_k}$. We introduce the matrices $V_{< k}$ and $V_{> k}$ as the \textit{variational interface matrices} of tangent vector $V$, defined in Def. \ref{variational_interface} of Appendix \ref{prelim_defs}.

\begin{theorem}\label{TT_diagonal_terms}
    The terms in the first sum of eq. \eqref{eq_split_correction} can be computed as follows. For $k < d$, we have:
    \begin{align*}
        (\P_X^k \De_V \P_X^k Z)^{<k>} &= (I \otimes X_{< k}) \left(((I - U_k^L(U_k^L)^\top)(I \otimes V_{< k}^\top) )Z^{<k>} \Xt_{\ge k+1} \right) \Xt_{\ge k+1}^\top \\
        &- (I \otimes X_{< k}) \left(\delta V_k^L (U_k^L)^\top (I \otimes X_{< k}))Z^{<k>} \Xt_{\ge k+1} \right) \Xt_{\ge k+1}^\top \\
        + (I \otimes X_{< k})( (&I - U_k^L(U_k^L)^\top)(I_{n_k} \otimes X_{\le k-1}^\top)Z^{<k>}(I - \Xt_{\ge k+1} \Xt_{\ge k+1}^\top) V_{\ge k+1} R_k^{-1})\Xt_{\ge k+1}^\top
    \end{align*}
    while for $k=d$ we have
    \begin{equation*}
        (\P_X^d \De_V \P_X^d Z)^{<d>} = (I \otimes X_{\le d-1})(I \otimes V_{< d}^\top) Z^{<d>}.
    \end{equation*}
    For typically structured $Z$ (e.g., full, sparse, low TT-rank), these formulas yield algorithms for computing the first sum in a number of arithmetic operations similar to that required for the computation of $\calP_X(Z)$.
\end{theorem}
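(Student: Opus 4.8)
The plan is to compute each diagonal term $\P_X^k (\D_V \P_X^k)(Z)$ directly from the explicit formula \eqref{proj_explicit} for $\P_X^k$ (and \eqref{proj_d} for $k=d$) by differentiating along a curve $c(t)$ on $\Mr$ with $c(0)=X$, $c'(0)=V$. The key observation is that the ingredients appearing in \eqref{proj_explicit}---the interface matrix $X_{\le k-1}$, the left-orthogonal core flattening $U_k^L$, and the right-orthogonalized interface $\Xt_{\ge k+1}$---each depend smoothly on $X$, and their derivatives along $V$ are exactly what the \emph{variational interface matrices} $V_{<k}$, $V_{>k}$ and the core variations $\delta V_k^L$ encode (this is the content of the tangent-vector parametrization \eqref{variation_init_def}--\eqref{param_alt}). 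So the proof is a careful product-rule expansion: each of the three (resp. one, for $k=d$) resulting terms in the statement corresponds to differentiating one of the factors $(I\otimes X_{<k})$, $(I-U_k^L(U_k^L)^\top)$ or $(I\otimes X_{\le k-1}^\top)$, and the trailing $\Xt_{\ge k+1}\Xt_{\ge k+1}^\top$ block, while holding the others fixed; then one applies $\P_X^k$ to the whole derivative and simplifies using orthogonality relations.

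Concretely, the steps I would carry out are: (i) fix a left-orthogonal minimal decomposition of $X$ and write down smooth curves for the cores $U_\mu(t)$ whose derivatives at $t=0$ are the $\delta V_\mu$; propagate these to curves for $X_{<k}(t)$, $U_k^L(t)$, $\Xt_{\ge k+1}(t)$ and record their $t$-derivatives in terms of $V_{<k}$, $\delta V_k^L$, $V_{\ge k+1}$ and the gauge matrices $R_k$ (this bookkeeping is where the $R_k^{-1}$ in the third term enters, coming from the re-orthogonalization that relates $\{U_k\}$ to $\{\Xt_k\}$). (ii) Differentiate $(\P_X^k(Z))^{<k>}$ as given in \eqref{proj_explicit} via the product rule, getting a sum of terms, one per differentiated factor. (iii) Apply the projector $\P_X^k$ on the left to this derivative; here several terms collapse because $\P_X^k$ annihilates everything that is not of the correct "shape," i.e. the column space condition enforced by $(I-U_k^L(U_k^L)^\top)$ and the row condition enforced by $\Xt_{\ge k+1}\Xt_{\ge k+1}^\top$, together with the gauge condition $(\delta V_k^L)^\top U_k^L = 0$. (iv) Collect the surviving terms and match them to the three lines of the theorem statement; for $k=d$ the projector \eqref{proj_d} has only the two $X_{\le d-1}$ factors and no trailing block, so differentiating the left factor drops out under $\P_X^d$ and only the middle-factor derivative $(I\otimes V_{<d}^\top)Z^{<d>}$ survives.

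The main obstacle I expect is the consistent handling of the two different core parametrizations (left-orthogonal $\{U_k\}$ vs.\ right-orthogonal $\{\Xt_k\}$, linked through the $R_k$) while differentiating: the projector \eqref{proj_explicit} mixes $X_{<k}$, $X_{\le k-1}$ and $\Xt_{\ge k+1}$, and one must be scrupulous about which of these vary and how their variations relate, otherwise spurious cross terms appear or the $R_k^{-1}$ factor is misplaced. A secondary, lighter obstacle is verifying that the terms I discard after applying $\P_X^k$ genuinely lie outside $\T_X^k\Mr$---this requires invoking the orthogonal-subspace decomposition of $\T_X\Mr$ from \citep{steinlechner2016riemannian} and the gauge conditions $(\delta V_i^L)^\top U_i^L = 0$, $(\delta\widetilde V_i^L)^\top U_i^L = 0$ rather than just Euclidean orthogonality. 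Once the variations are tabulated correctly, the remaining algebra is routine product-rule manipulation of Kronecker products and flattenings. The final sentence of the statement---that the resulting formulas cost no more than computing $\P_X(Z)$ for structured $Z$---follows by inspecting the formulas term by term: each is built from the same interface matrices, Kronecker products, and small matrix solves that already appear in \eqref{proj_explicit}, so the complexity analysis in Appendix~\ref{proofs_complexity} reduces to counting these shared primitives and observing that the variational interface matrices $V_{<k}$, $V_{>k}$ are themselves precomputable by a single sweep of cost comparable to forming $X_{<k}$, $\Xt_{\ge k+1}$.
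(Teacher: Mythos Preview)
Your proposal is correct and follows essentially the same route as the paper: construct a curve of left-orthogonal cores, differentiate the explicit projector formula~\eqref{proj_explicit} by the product rule (the paper first rewrites the left block as $I\otimes X_{<k}X_{<k}^\top - X_{\le k}X_{\le k}^\top$ before differentiating, which is cosmetic), then apply $\P_X^k$ and simplify using $V_{\le k}^\top X_{\le k}=0$ and the gauge conditions. The one technical point you flag as the main obstacle---the derivative of $\Xt_{\ge k+1}\Xt_{\ge k+1}^\top$ and the appearance of $R_k^{-1}$---is handled in the paper (Lemma~\ref{eq_right_variational}) by writing $\D_V(\Xt_{>}) = \Xt_>\Omega + \Xt_>^\perp B$ via the Stiefel tangent-space decomposition, so that the skew-symmetric $\Omega$ cancels and only the $(I-\Xt_>\Xt_>^\top)V_> R_k^{-1}$ piece survives; your intuition about its origin is right.
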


For the cross-terms (non-diagonal terms), we show the identity $\P_X^i \D_V \P_X^j Z = -(\De_V \P_X^i) \P_X^j Z$. This allows us to express the cross-terms in not only a simplified way, but into an expression that allows us to re-use computations already made for the diagonal terms:
\begin{theorem}\label{TT_cross_terms}
    The terms in the double sum of eq. \eqref{eq_split_correction} can be computed as follows.
    \begin{align*}
        j > i, i < d : (\P_X^i \De_V \P_X^j Z)^{<i>} &= (I \otimes X_{< i}) \left(\delta V_i^L (Y_{\ge i+1}^j)^\top \Xt_{> i} \right) \Xt_{> i}^\top, \\
        j < i, i < d : (\P_X^i \De_V \P_X^j Z)^{<i>} &= -(I \otimes X_{< i})\left( (I - U_k^L(U_k^L)^\top)(I \otimes V_{< i}^\top)Y^j_{\le i} \right) \Xt_{\ge i+1}^\top, \\
        j < i, i = d : (\P_X^d \De_V \P_X^j Z)^{<d>} &= -(I \otimes X_{< d})(I \otimes V_{< d}^\top)Y^j_{\le d},
    \end{align*}
    where $Y^j$ is the TT-tensor given by $\P_X^j Z$. Given the set $\{Y^1, \ldots, Y^n\}$, which are computed as a by-product from efficient algorithms for the first sum, the cross-terms $\sum_{i = 1}^d \sum_{j = 1, j \neq i}^d \P_X^i (\D_V \P_X^j) (Z)$ are computable in $O(d^2nr^3)$ arithmetic operations.
\end{theorem}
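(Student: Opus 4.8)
\emph{Setup.} The plan is to derive the cross-term formulas from the diagonal-term formulas of Theorem~\ref{TT_diagonal_terms}, using only the mutual orthogonality of the subspaces $\mathcal{V}_k := \mathrm{range}(\P_X^k)$ and the structure of the tangent vectors $Y^j := \P_X^j Z$. First I would establish the reduction identity: since the splitting~\eqref{proj_split} is available at \emph{every} point of $\Mr$, with $\mathcal{V}_1(X'),\dots,\mathcal{V}_d(X')$ mutually orthogonal, the operator identity $\P_{X'}^i\P_{X'}^j = 0$ holds for all $X'\in\Mr$ and all $i\ne j$. Differentiating along a curve $c(t)$ with $c(0)=X$, $c'(0)=V$ and using the product rule gives $(\D_V\P_X^i)\P_X^j + \P_X^i(\D_V\P_X^j) = 0$, hence $\P_X^i\D_V\P_X^j Z = -(\D_V\P_X^i)(Y^j)$. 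Moreover the left-hand side lies in $\mathrm{range}(\P_X^i)=\mathcal{V}_i$, so the same is true of $(\D_V\P_X^i)(Y^j)$, whence $\P_X^i\D_V\P_X^j Z = -(\P_X^i\D_V\P_X^i)(Y^j)$. Now Theorem~\ref{TT_diagonal_terms}, applied with $k=i$ and with $Y^j$ in the role of $Z$, provides an explicit expression for the right-hand side; it remains only to simplify it using $Y^j\in\mathcal{V}_j$.

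\emph{Specialization.} For $j>i$, the $i$-flattening of $Y^j$ has left factor equal to the interface matrix $X_{\le i}=(I_{n_i}\otimes X_{\le i-1})U_i^L$, so $(I-U_i^L(U_i^L)^\top)(I_{n_i}\otimes X_{\le i-1}^\top)(Y^j)^{<i>}=0$ (using $(U_i^L)^\top U_i^L=I$ and $X_{\le i-1}^\top X_{\le i-1}=I$), which kills the third diagonal term; and $(I\otimes V_{<i}^\top)(Y^j)^{<i>}=0$ because $V_{<i}^\top X_{\le i-1}=0$ --- a telescoping contraction using left-orthogonality of $U_1,\dots,U_{i-1}$ and the gauge condition $(\delta V_k^L)^\top U_k^L=0$ --- which kills the first; the surviving (second) term collapses, via $(U_i^L)^\top(I\otimes X_{\le i-1}^\top)(Y^j)^{<i>}=(Y^j_{\ge i+1})^\top$, to the stated formula (the sign flip accounting for the $-$ above). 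Symmetrically, for $j<i<d$ the $i$-flattening of $Y^j$ has right factor $\Xt_{\ge i+1}^\top$, i.e.\ $(Y^j)^{<i>}=Y^j_{\le i}\Xt_{\ge i+1}^\top$; then $(Y^j)^{<i>}(I-\Xt_{\ge i+1}\Xt_{\ge i+1}^\top)=0$ kills the third term, $X_{\le i-1}^\top Y^j_{\le i-1}=0$ (the same telescoping, now using the gauge $(\delta\widetilde V_j^L)^\top U_j^L=0$ of the right-orthogonal parametrization of $Y^j$) kills the second, and the first term gives the stated formula with $(Y^j)^{<i>}\Xt_{\ge i+1}=Y^j_{\le i}$. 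The case $j<i=d$ follows from the $k=d$ formula of Theorem~\ref{TT_diagonal_terms} in the same way, and $j>i=d$ is vacuous.

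\emph{Complexity.} The tensors $Y^1,\dots,Y^d$ are already available: the building blocks of $\P_X^k Z$ in~\eqref{proj_explicit}--\eqref{proj_d} are exactly the intermediate quantities formed while evaluating the diagonal correction term, so obtaining the $Y^j$ costs no extra flops. Given them, for each fixed $j$ the variational interfaces $Y^j_{\le i}$ and $Y^j_{\ge i+1}$ are built incrementally over $i$ in $O(dnr^3)$ total, and each of the $O(d)$ cross-terms $\P_X^i\D_V\P_X^j Z$ is then a product of a bounded number of matrices, evaluated in $O(nr^3)$ via the TT structure; summing over $j$ gives $O(d^2 n r^3)$.

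\emph{Main obstacle.} The hard part will be the specialization step: verifying precisely which of the three diagonal contributions survive the substitution $Z\mapsto Y^j$. This rests on the interface recursions $X_{\le k}=(I_{n_k}\otimes X_{\le k-1})U_k^L$ (and its right-orthogonal counterpart for $\Xt_{\ge k+1}$), the orthonormality relations $(U_k^L)^\top U_k^L=I$ and $\Xt_{\ge k+1}^\top\Xt_{\ge k+1}=I$, and the two gauge conditions. The delicate point is keeping the $\{\delta V_k\}$ and $\{\delta\widetilde V_k\}$ parametrizations straight (the $R_k$-factors), for which Lemma~\ref{interchange_tangent_param} is needed in order to see, for instance, that the left part $Y^j_{\le i-1}$ of $Y^j$ --- which is assembled from $U_1,\dots,U_{j-1},\delta\widetilde V_j,\widetilde U_{j+1},\dots$ --- is orthogonal to $X_{\le i-1}$.
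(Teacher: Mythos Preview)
Your proposal is correct and closely parallels the paper's proof. Both arguments start from the reduction $\P_X^i\D_V\P_X^j Z=-(\D_V\P_X^i)Y^j$, obtained by differentiating the orthogonality $\P_X^i\P_X^j\equiv 0$ along a curve, and then exploit the structured $i$-flattening of $Y^j$ (namely $X_{\le i}(Y^j_{>i})^\top$ for $j>i$ and $Y^j_{\le i}\Xt_{>i}^\top$ for $j<i$) together with the gauge and interface identities $V_{<i}^\top X_{<i}=0$, $X_{<i}^\top Y^j_{<i}=0$, $(I-U_i^L(U_i^L)^\top)U_i^L=0$ to kill all but one summand. The one difference is that the paper substitutes $Y^j$ into the \emph{unprojected} differential of Lemma~\ref{explicit_differential}, whereas you observe that $(\D_V\P_X^i)Y^j$ already lies in $\mathrm{range}(\P_X^i)$ and hence equals $\P_X^i(\D_V\P_X^i)Y^j$, letting you invoke Theorem~\ref{TT_diagonal_terms} directly; this is a small but pleasant economy, since it reuses the already-proved diagonal formula rather than reopening the intermediate lemma.
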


\section{Numerical analysis on tensor completion}

Using our analytical expression of the Riemannian Hessian, we develop a Riemannian Trust Regions (RTR) method for solving optimization problems over $\Mr$ \citep{genrtr, manopt}. To assess the performance of this method, we compare RTR with Alternating Least Squares (ALS) and a conjugate gradient method on tensor completion (RTTC) \cite{Steinlechner2016RiemannianOF}, both of which were coded by Steinlechner et al.\ We also compare to RTR when we use a finite-difference approximation of the Riemannian Hessian: we denote the resulting algorithm FD-TR.

In our experiments, all tensors are of size $(4, 4, \ldots, 4)$ and some order $d$, specified at each experiment. For each experiment, we report the convergence of each algorithm in terms of cost (training cost), test cost from an independent set of samples, and gradient norm for algorithms where this applies (RTR, FD-TR, RTTC). A quantity of critical importance is the \textit{oversampling ratio}: $|\Omega| / \text{dim}(\Mr)$, where $|\Omega|$ is the number of observed indices. We also report the \textit{sampling ratio}, $|\Omega| / 4^d$.

Graphs of the results and further details of the experiments can be found in Appendix \ref{experiments}. In summary, we find that RTR is slower on versions of tensor completion with better conditioned Hessians, but outperforms other algorithms in more challenging instances of the problem where the target point Hessian has worse conditioning (something we can assess using our Hessian formulas).

\subsection*{Funding}
This work was supported by the National Science Foundation through award DMS-1719558.

\bibliography{boumal}


\pagebreak
\appendix
\section{Experiments on tensor completion}\label{experiments}

We consider target points randomly generated on the manifold by constructing normally distributed TT-cores. Observed entries are chosen according to some distribution $\textbf{p} = (p_1, p_2, p_3, p_4)$ such that, for each sample index $(i_1, \ldots, i_d$), each $i_k$ is chosen at random from $\{1, 2, 3, 4\}$ according to the distribution $\textbf{p}$ (not necessarily uniform). These experiments illustrate the observation that second-order methods perform better on ``harder'' versions of tensor completion.

For each figure, we plot the (training loss/test loss/gradient norm) over 10 trials, each trial with a different random initialization and target tensor. The three algorithms are compared on each trial, and the 10 trials are then plotted over each other in a single chart. The differences in the problem setting for each figure are described in the figure details.

For Trust Regions, we used a starting radius of $100$ and a maximum radius of $100 \cdot 2^{11}$.
\vspace{4mm}
\begin{figure}[H]
    \centering
    \includegraphics[width=\columnwidth]{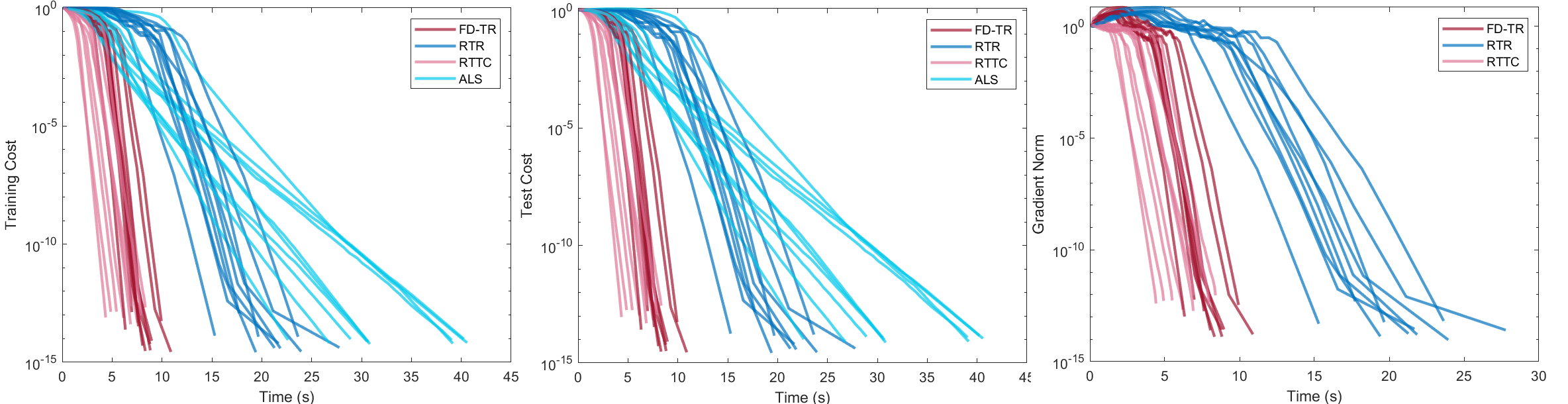}
    \caption{$d = 9$, $\mathbf{r} = (3, 5, 10, 10, 10, 10, 5, 3)$, oversampling ratio: 20.5, sampling ratio: 0.1, $\mathbf{p} = (\frac{1}{4}, \frac{1}{4}, \frac{1}{4}, \frac{1}{4})$, condition numbers of the Hessian at each target point: around 10. This could be considered as an ``easier'' tensor completion problem, as both the oversampling and sampling ratios are relatively high, and entries are sampled uniformly at random.}
\end{figure}
\begin{figure}[H]
    \centering
    \includegraphics[width=\columnwidth]{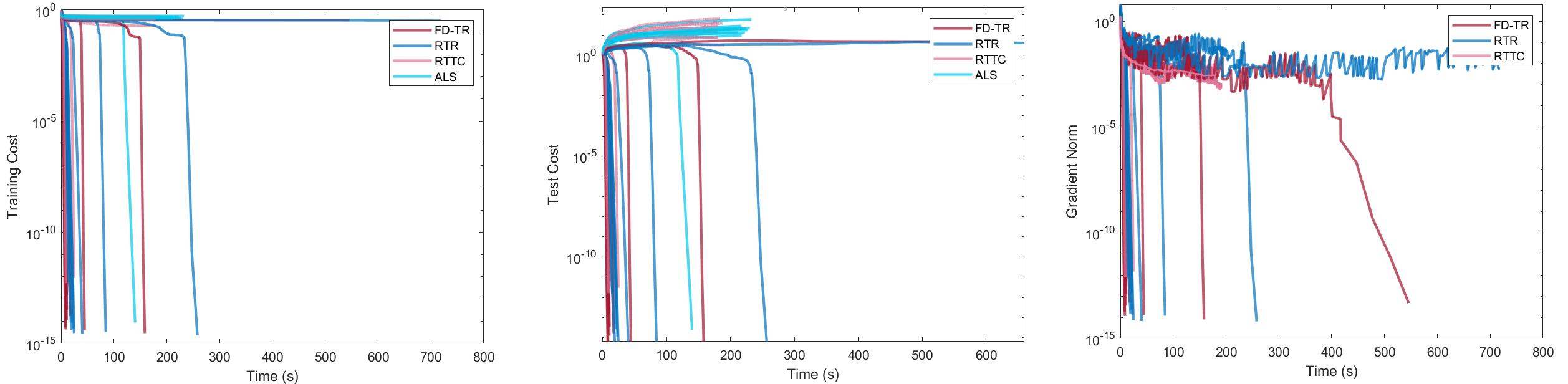}
    \caption{$d = 9$, $\mathbf{r} = (3, 4, 8, 12, 12, 8, 4, 3)$, Oversampling Ratio: 5.2, Sampling ratio: 0.025, $\mathbf{p} = (\frac{2}{5}, \frac{1}{5}, \frac{1}{5}, \frac{1}{5})$, Hessian condition numbers: $10^2 - 10^3$. As we decrease the oversampling ratio, the benefits of second-order methods start to show. In the alloted time, out of 10 trials, ALS converged once, RTTC converged twice, and both RTR and FD-TR converged 9 times. Note though that FD-TR converges faster than RTR; FD-TR performs similarly to RTR iteration-wise, but the analytic Hessian takes longer to compute than the finite difference approximation.}
\end{figure}
\begin{figure}[H]
    \centering
    \includegraphics[width=\columnwidth]{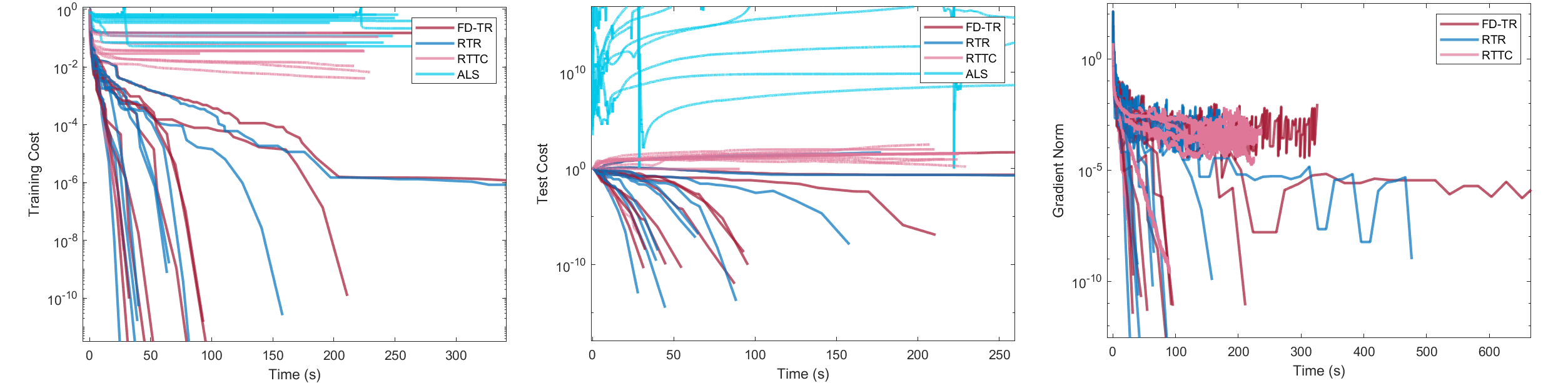}
    \caption{$d = 9$, $\mathbf{r} = (2, 2, 3, 3, 3, 3, 2, 2)$, Oversampling Ratio: 5.1, Sampling ratio: 0.003, \textbf{p} = $(\frac{50}{65}, \frac{12}{65}, \frac{2}{65}, \frac{1}{65})$, Hessian condition numbers: $10^5 - 10^7$. This example illustrates that having access to the true Hessian can yield an advantage over using a finite difference approximation. RTR outperformed all other algorithms in each instance separately.}
\end{figure}

\section{Proofs for the tensor train format}

This section contains proofs for properties of tensors in the tensor train format. We use these initial results to work out the main result of the paper: the Riemannian Hessian for $\Mr$.
\subsection{Preliminary definitions}\label{prelim_defs}
In this subsection, we establish the various notation used in subsequent proofs. For any tensor $X \in \reals^{n_1 \times \cdots \times n_d}$, it holds that $X \in \Mr$ if and only if $(\rank(X^{<1>}), \ldots, \rank(X^{<d-1>})) = \mathbf{r}$. For $\Mr$ to be non-empty, it is necessary and sufficient for $r_{k-1} \le n_k r_k$ and $r_k \le n_k r_{k-1}$ for all $k \in [d]$, so for the remainder of this section, we assume these conditions hold. This statement can be found in \citep[eq. ~(9.32)]{uschmajew2020geometrylowrank}.

For any $X \in \Mr$, unless otherwise stated, denote $U_1, \ldots, U_d$ to be a minimal, left-orthogonal TT-decomposition (note this decomposition is not unique), and let $\widetilde{U}_1, \ldots, \widetilde{U}_d$ be the resulting TT-decomposition from right-orthogonalization of $U_1, \ldots, U_d$ using \cite[Alg.~4.1]{steinlechner2016riemannian}. Let $X_{\le k}, X_{\ge k}$ denote the interface matrices for $U_1, \ldots, U_d$, and let $\Xt_{\le k}, \Xt_{\ge k}$ denote the interface matrices for $\widetilde{U}_1, \ldots, \widetilde{U}_d$.

\begin{definition}\label{flattening_def}
Let $Z \in \mathbb{R}^{n_1 \times \cdots \times n_d}$ be a tensor of order $d$. Then the ``$\mu$th flattening'', written as $Z^{<\mu>} \in \mathbb{R}^{n_1\cdots n_\mu \times n_{\mu + 1}\cdots n_d}$ flattens $Z$ to a matrix of size $(i_1 \cdots i_\mu \times i_{\mu + 1} \cdots i_d)$: within each dimension of the matrix, the indices are ordered colexicographically. (This is the same as calling Matlab's ``reshape'' method on a multidimensional array with the specified target dimension.)
\end{definition}
    
\begin{definition}\label{LR-flattening}
Let $U_\mu$ be a core of some TT-decomposition of $X$. Since cores are third-order tensors, there are only two non-vector $\mu$th flattenings: $U_\mu ^{<1>} \in \mathbb{R}^{r_{\mu - 1} \times n_\mu r_\mu}$ and $U_\mu ^{<2>} \in \mathbb{R}^{r_{\mu - 1}n_\mu \times r_\mu}$, which we will denote $U_\mu^R$ and $U_\mu^L$ respectively. These are called the ``right flattening'' and ``left flattening''. Without loss of generality, we assume that $(U_\mu^L)^\top U_\mu^L = I_{r_\mu}$ for all $\mu \in \{1, \ldots, d-1\}$ (see \cite[\S 4.2.1]{steinlechner2016riemannian}).
\end{definition}
\begin{definition}\label{def:interface}
    Let $X$ be a TT-tensor with cores $U_1, \ldots, U_d$. For each $k \in \{0, \ldots, d\}$, define $X_{\le k} \in \reals^{n_1 \cdots n_k \times r_k}$ by the recursive formula $X_{\le k} = (I_{n_k} \otimes X_{\le k-1})U_k^L$ and base case $X_{\le 0} = 1 \in \reals$. Similary, for each $k \in \{1, \ldots, d+1\}$, define $X_{\ge k} \in \reals^{r_{k-1} \times n_k \cdots n_d }$ by the recursive formula $X_{\ge k}^\top = U_k^R (X_{\ge k+1}^\top \otimes I_{n_k})$ and the base case $X_{\ge d+1} = 1 \in \reals$. We call these matrices the \textit{interface matrices} of $X$. While phrased differently, this definition is equivalent to the one given in \citep[\S 4.1]{steinlechner2016riemannian}.
\end{definition}
\begin{definition}\label{variational_interface}
    Let $V \in \T_X \Mr$ be represented as in eq. \eqref{variation_init_def}. For each $k \in \{0, \ldots, d\}$, define $V_{\le k} \in \reals^{n_1 \cdots n_k \times r_k}$ by the recursive formula $V_{\le k} = (I_{n_k} \otimes V_{\le k-1})U_k^L + (I_{n_k} \otimes X_{\le k-1}){\delta V}_k^L$ and base case $V_{\le 0} = 0 \in \reals$. Similary, for each $k \in \{1, \ldots, d+1\}$, define $V_{\ge k} \in \reals^{r_{k-1} \times n_k \cdots n_d }$ by the recursive formula $V_{\ge k}^\top = U_k^R (V_{\ge k+1}^\top \otimes I_{n_k}) + \delta V_k^R (X_{\ge k+1}^\top \otimes I_{n_k})$ and the base case $V_{\ge d+1} = 0 \in \reals$. We call these matrices the \textit{variational interface matrices} of tangent vector $V$.
\end{definition}

\subsection{$\mu$-orthogonal decompositions}\label{orthogonal_decomp}

Let $X \in \mathbb{R}^{n_1 \times \cdots \times n_d}$ be a TT-tensor with a minimal decomposition $U_\mu$. We call a decomposition \textit{$\mu$-orthogonal} if $(U_k^L)^\top U_k^L = I_{r_k}$ for all $k \in [\mu - 1]$ and $U_k^R (U_k^R)^\top = I_{r_{k-1}}$ for all $k \in \{\mu + 1, \ldots , d\}$. We also call $d$-orthogonal decompositions \textit{left orthogonal} and $1$-orthogonal decompositions \textit{right orthogonal}. Transforming a decomposition to a $\mu$-orthogonal one without changing the underlying tensor is called \textit{$\mu$-orthogonalization}, and any minimal TT-decomposition can be $\mu$-orthogonalized for any $\mu \in [d]$ in $O(dnr^3)$ flops using the algorithm presented in \citep[Alg.~4.1]{steinlechner2016riemannian}.

We will frequently use the right-orthogonalization of a minimal left-orthogonal decomposition $U_1, \ldots, U_d$; denote these resulting right-orthogonal cores $\widetilde{U}_1, \ldots, \widetilde{U}_d$ and their interface matrices $\widetilde{X}_{\le k}$ and $\widetilde{X}_{\ge k}$. Note this is a different definition for $\widetilde{X}_{\ge k}$ from what is given for Theorem \ref{TT_diagonal_terms}; these two definitions for $\widetilde{X}_{\ge k}$ indeed define the same matrix \citep[\S 4.2.1]{steinlechner2016riemannian}. We also define $\{R_2, \ldots, R_d\}$ by the relation $X_{\ge k+1} = \Xt_{\ge k+t} R_k$, which are generated as a by-product of \citep[Alg.~4.1]{steinlechner2016riemannian}.

\subsection{Proofs regarding the Tensor Train format}
The following two lemmas can be found in \citep[\S 4.1-2]{steinlechner2016riemannian} and are fundamental identities for later proofs.

\begin{lemma}\label{interface:rows}
    Let $X \in \Mr$, and recall Definition \ref{def:interface}. Let $\mathbf{i} = (i_1, \ldots, i_d)$ be a multi-index for $X$. The following two identities hold:
    \begin{enumerate}
    \item $\emph{row}_{i_1 \cdots i_k}\left(X_{\le k}\right) = U_1(i_1) \cdots U_k(i_k)$\label{TT_parttwo}
    \item $\emph{row}_{i_1 \cdots i_k}\left(X_{\ge k}\right) = \left(U_k(i_k) \cdots U_d(i_d)\right)^\top$ \label{TT_partthree}
    \end{enumerate}
\end{lemma}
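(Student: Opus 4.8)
The plan is to prove both identities by induction on $k$, unfolding the recursive definitions in Definition \ref{def:interface} and tracking what the colexicographic flattening convention does to individual rows. For identity \eqref{TT_parttwo}, I would induct from the base case $X_{\le 0} = 1$ upward. The inductive step rests on the recursion $X_{\le k} = (I_{n_k} \otimes X_{\le k-1}) U_k^L$. The key is to understand the row indexing: since $X_{\le k}$ has $n_1 \cdots n_k$ rows ordered colexicographically in $(i_1, \ldots, i_k)$, the Kronecker factor $I_{n_k} \otimes X_{\le k-1}$ selects, for the multi-index $(i_1, \ldots, i_k)$, the block corresponding to $i_k$ and within it the row indexed by $(i_1, \ldots, i_{k-1})$. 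Concretely, $\mathrm{row}_{i_1 \cdots i_k}\big((I_{n_k} \otimes X_{\le k-1}) U_k^L\big) = \mathrm{row}_{i_1 \cdots i_{k-1}}(X_{\le k-1}) \cdot U_k^L(\text{block } i_k)$, where the relevant block of $U_k^L$ is exactly $U_k(i_k) \in \reals^{r_{k-1} \times r_k}$ by Definition \ref{LR-flattening} (the left flattening stacks the slices $U_k(i_k)$ vertically in colex order of $i_k$). Applying the inductive hypothesis $\mathrm{row}_{i_1 \cdots i_{k-1}}(X_{\le k-1}) = U_1(i_1) \cdots U_{k-1}(i_{k-1})$ then gives the claim.

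Identity \eqref{TT_partthree} is handled symmetrically, inducting downward from the base case $X_{\ge d+1} = 1$, using the recursion $X_{\ge k}^\top = U_k^R (X_{\ge k+1}^\top \otimes I_{n_k})$. Transposing, $X_{\ge k} = (X_{\ge k+1} \otimes I_{n_k})^\top (U_k^R)^\top$; I would read off the row of $X_{\ge k}$ indexed by $(i_k, \ldots, i_d)$, noting that $U_k^R \in \reals^{r_{k-1} \times n_k r_k}$ places the slice $U_k(i_k)$ in the block columns indexed by $i_k$, and that the Kronecker structure $X_{\ge k+1}^\top \otimes I_{n_k}$ routes the $(i_{k+1}, \ldots, i_d)$ dependence into $X_{\ge k+1}$. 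Combining with the inductive hypothesis $\mathrm{row}_{i_{k+1} \cdots i_d}(X_{\ge k+1}) = (U_{k+1}(i_{k+1}) \cdots U_d(i_d))^\top$ yields $\mathrm{row}_{i_k \cdots i_d}(X_{\ge k}) = (U_k(i_k) U_{k+1}(i_{k+1}) \cdots U_d(i_d))^\top$, as desired. (One must also consistently track that the colex ordering on the columns of $X_{\ge k}$ matches the colex ordering induced by $U_k^R$ and the Kronecker product.)

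The main obstacle is purely bookkeeping: making the interaction between the colexicographic flattening convention (Definition \ref{flattening_def}) and the Kronecker products in the recursions precise, so that "select the $i_k$-block, then index within it" is justified rather than asserted. In particular one has to be careful that the convention used to flatten $U_k$ into $U_k^L$ and $U_k^R$ is the same convention that makes $I_{n_k} \otimes X_{\le k-1}$ (rather than $X_{\le k-1} \otimes I_{n_k}$) the correct Kronecker factor — this is a matter of which index varies fastest. Since the statement merely collects identities from \citep[\S 4.1-2]{steinlechner2016riemannian}, I would keep this at the level of a short induction with the indexing spelled out once, rather than re-deriving the flattening calculus from scratch.
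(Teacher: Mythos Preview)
Your proposal is correct and matches the paper's approach exactly: the paper states only that the lemma ``is proven trivially from induction on the inductive definitions for the interface matrices,'' and your plan to induct on $k$ via the recursions $X_{\le k} = (I_{n_k} \otimes X_{\le k-1})U_k^L$ and $X_{\ge k}^\top = U_k^R(X_{\ge k+1}^\top \otimes I_{n_k})$ is precisely that. Your additional care with the colexicographic indexing is more detail than the paper provides, but is the right bookkeeping to make the induction rigorous.
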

where $\text{row}_{\mu}(A) \in \reals^{1 \times n}$ denotes the $\mu$th row of matrix $A \in \reals^{m \times n}$. Lemma \ref{interface:rows} is proven trivially from induction on the inductive definitions for the interface matrices. Using Lemma \ref{interface:rows}, it is straightforward to establish the following equation.
\begin{lemma} \label{TT}
    For any TT-decomposition $U_1, \ldots, U_d$ of $X$ (not necessarily left-orthogonal), the following identity holds:

    $$X^{<k>} = X_{\le k} X_{\ge k+1}^\top$$
\end{lemma}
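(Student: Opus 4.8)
The plan is to prove the identity $X^{<k>} = X_{\le k} X_{\ge k+1}^\top$ entrywise, by comparing the $(i_1\cdots i_k,\ i_{k+1}\cdots i_d)$ entry of the matrix on the left with that on the right. First I would recall that, by Definition~\ref{flattening_def}, the entry of $X^{<k>}$ indexed by the colexicographic pair of multi-indices $(i_1,\ldots,i_k)$ and $(i_{k+1},\ldots,i_d)$ is exactly $X(i_1,\ldots,i_d)$, which by the defining factorization~\eqref{TT_definition} equals the scalar $U_1(i_1)U_2(i_2)\cdots U_d(i_d)$. So the target reduces to showing that this product of core-slice matrices factors through index $k$.

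Next I would split that product at position $k$: since matrix multiplication is associative and $U_{k+1}(i_{k+1})\cdots U_d(i_d)$ is an $r_k\times 1$ matrix (recall $r_d=1$) while $U_1(i_1)\cdots U_k(i_k)$ is a $1\times r_k$ matrix, we have
\begin{align*}
  X(i_1,\ldots,i_d) &= \big(U_1(i_1)\cdots U_k(i_k)\big)\big(U_{k+1}(i_{k+1})\cdots U_d(i_d)\big).
\end{align*}
Now I would invoke Lemma~\ref{interface:rows}: part~\ref{TT_parttwo} identifies $U_1(i_1)\cdots U_k(i_k)$ with $\mathrm{row}_{i_1\cdots i_k}(X_{\le k})$, and part~\ref{TT_partthree} (applied at index $k+1$) identifies $U_{k+1}(i_{k+1})\cdots U_d(i_d)$ with $\big(\mathrm{row}_{i_{k+1}\cdots i_d}(X_{\ge k+1})\big)^\top$. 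Substituting, the entry becomes $\mathrm{row}_{i_1\cdots i_k}(X_{\le k})\cdot\big(\mathrm{row}_{i_{k+1}\cdots i_d}(X_{\ge k+1})\big)^\top$, which is precisely the $(i_1\cdots i_k,\ i_{k+1}\cdots i_d)$ entry of $X_{\le k}X_{\ge k+1}^\top$. Since this holds for every multi-index, the two matrices are equal.

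The only place requiring care — the ``main obstacle,'' though it is light — is bookkeeping the index orderings: one must check that the colexicographic ordering of the row index $i_1\cdots i_k$ used in the flattening $X^{<k>}$ matches the ordering of rows of the interface matrix $X_{\le k}$ built by the recursion $X_{\le k}=(I_{n_k}\otimes X_{\le k-1})U_k^L$, and likewise for the column index and $X_{\ge k+1}$. This is exactly the content of Lemma~\ref{interface:rows}, so once that lemma is in hand (it follows by a routine induction on the recursive definitions, unwinding the Kronecker products), no further work is needed. Note also that left-orthogonality of the decomposition plays no role here: the identity holds for an arbitrary TT-decomposition, because the argument only uses the factorization~\eqref{TT_definition} and the row identities of Lemma~\ref{interface:rows}, neither of which assumes orthogonality.
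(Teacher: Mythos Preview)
Your proposal is correct and matches the paper's approach: the paper states that Lemma~\ref{TT} follows directly from Lemma~\ref{interface:rows}, and your entrywise argument using parts~\ref{TT_parttwo} and~\ref{TT_partthree} of that lemma is exactly the intended derivation. Your remark that left-orthogonality is irrelevant here is also consistent with the paper's statement.
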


\begin{lemma} \label{TT-orthogonal}
  If TT-cores $U_1, \ldots, U_d$ are left-orthogonal, the interface matrices $X_{\le 1}, \ldots , X_{\le d-1}$ have orthonormal columns: $X_{\le k}^\top X_{\le k} = I_{r_k}$. Similarly, the interface matrices $\widetilde{X}_{\ge 2}, \ldots , \widetilde{X}_{\ge d}$ have orthonormal columns.
\end{lemma}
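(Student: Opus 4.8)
The plan is to prove Lemma~\ref{TT-orthogonal} by induction on $k$, using the recursive definition of the interface matrices (Definition~\ref{def:interface}) together with the left-orthogonality hypothesis $(U_k^L)^\top U_k^L = I_{r_k}$ for $k \in [d-1]$. First I would recall that $X_{\le k} = (I_{n_k} \otimes X_{\le k-1}) U_k^L$, with base case $X_{\le 0} = 1$. Since $X_{\le 0}^\top X_{\le 0} = 1 = I_{r_0}$, the base case is immediate. For the inductive step, assume $X_{\le k-1}^\top X_{\le k-1} = I_{r_{k-1}}$; then compute
\begin{align*}
X_{\le k}^\top X_{\le k} &= (U_k^L)^\top (I_{n_k} \otimes X_{\le k-1})^\top (I_{n_k} \otimes X_{\le k-1}) U_k^L \\
&= (U_k^L)^\top (I_{n_k} \otimes X_{\le k-1}^\top X_{\le k-1}) U_k^L = (U_k^L)^\top (I_{n_k} \otimes I_{r_{k-1}}) U_k^L = (U_k^L)^\top U_k^L = I_{r_k},
\end{align*}
using the mixed-product property of the Kronecker product and the inductive hypothesis. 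This establishes the claim for $X_{\le 1}, \ldots, X_{\le d-1}$.

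For the second statement about $\widetilde{X}_{\ge 2}, \ldots, \widetilde{X}_{\ge d}$, I would argue symmetrically. By construction (Appendix~\ref{orthogonal_decomp}), right-orthogonalizing the cores yields $\widetilde{U}_k$ with $\widetilde{U}_k^R (\widetilde{U}_k^R)^\top = I_{r_{k-1}}$ for $k \in \{2, \ldots, d\}$, and the associated interface matrices satisfy $\widetilde{X}_{\ge k}^\top = \widetilde{U}_k^R (\widetilde{X}_{\ge k+1}^\top \otimes I_{n_k})$ with base case $\widetilde{X}_{\ge d+1} = 1$. Running the induction downward from $k = d+1$: the base case gives $\widetilde{X}_{\ge d+1}^\top \widetilde{X}_{\ge d+1}$... more precisely I want $\widetilde{X}_{\ge k} \widetilde{X}_{\ge k}^\top = I_{r_{k-1}}$ (columns orthonormal means $\widetilde{X}_{\ge k}^\top$ has orthonormal rows, i.e. $\widetilde{X}_{\ge k}$ as written in $\reals^{r_{k-1} \times n_k \cdots n_d}$ — careful with which dimension is "columns"). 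Assuming $\widetilde{X}_{\ge k+1} \widetilde{X}_{\ge k+1}^\top = I_{r_k}$, we get
\begin{align*}
\widetilde{X}_{\ge k}^\top \widetilde{X}_{\ge k} &= \widetilde{U}_k^R (\widetilde{X}_{\ge k+1}^\top \otimes I_{n_k})(\widetilde{X}_{\ge k+1} \otimes I_{n_k}) (\widetilde{U}_k^R)^\top \\
&= \widetilde{U}_k^R (\widetilde{X}_{\ge k+1}^\top \widetilde{X}_{\ge k+1} \otimes I_{n_k})(\widetilde{U}_k^R)^\top = \widetilde{U}_k^R (\widetilde{U}_k^R)^\top = I_{r_{k-1}},
\end{align*}
again by the mixed-product property and the downward inductive hypothesis.

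The main obstacle is largely bookkeeping rather than mathematical depth: one must be careful about (i) the orientation convention for "columns" in the definition of $\widetilde{X}_{\ge k}$ (it lives in $\reals^{r_{k-1}\times n_k\cdots n_d}$, so "orthonormal columns" of the $n_k\cdots n_d \times r_{k-1}$ matrix $\widetilde{X}_{\ge k}^\top$, hence the Gram matrix to check is $\widetilde{X}_{\ge k}\widetilde{X}_{\ge k}^\top$ or $\widetilde{X}_{\ge k}^\top\widetilde{X}_{\ge k}$ depending on reading), and (ii) correctly invoking $(A\otimes B)(C\otimes D) = AC\otimes BD$ and $(A\otimes B)^\top = A^\top\otimes B^\top$. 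Once the conventions are pinned down, both halves are a two-line induction. Since the excerpt explicitly states this lemma "can be found in \citep[\S 4.1-2]{steinlechner2016riemannian}," I would keep the proof brief, presenting the inductive identities above and citing Steinlechner et al.\ for the convention details, noting that the left-orthogonality of $\{U_k\}$ and the right-orthogonality of $\{\widetilde{U}_k\}$ (Definition in Appendix~\ref{orthogonal_decomp}) feed directly into the respective inductions.
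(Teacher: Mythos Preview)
Your proposal is correct and follows essentially the same inductive argument as the paper: the paper proves the left-orthogonal case by induction using the recursion $X_{\le k} = (I_{n_k}\otimes X_{\le k-1})U_k^L$, the mixed-product property, and $(U_k^L)^\top U_k^L = I_{r_k}$, then simply remarks that ``the right-orthogonal case can be proven in a similar manner.'' The only cosmetic differences are that the paper starts its induction at $k=1$ (using $X_{\le 1}=U_1^L$) rather than your $k=0$, and that you spell out the downward induction for $\widetilde{X}_{\ge k}$ where the paper omits it; your caveat about the orientation convention for $\widetilde{X}_{\ge k}$ is well-placed, since the paper itself is slightly inconsistent on whether $X_{\ge k}$ lives in $\reals^{r_{k-1}\times n_k\cdots n_d}$ or its transpose.
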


\begin{proof}
  We prove by induction, starting with the left-orthogonal case. For the base case, note that $X_{\le 1} = U_1^L$, and we know that $(U_1^L)^\top U_1^L = I_{r_1}$ by left-orthogonality, proving the base case. Now assuming that $X_{\le k-1}^\top X_{\le k-1} = I_{r_{k-1}}$, we expand the matrix $X_{\le k}^\top X_{\le k}$ inductively:
  \begin{align*}
    X_{\le k}^\top X_{\le k} &= (U_k^L)^\top(I_{n_k} \otimes X_{\le k-1}^\top)(I_{n_k} \otimes X_{\le k-1})U_k^L \\
    &= (U_k^L)^\top(I_{n_k} \otimes X_{\le k-1}^\top X_{\le k-1})U_k^L \\
    &= (U_k^L)^\top U_k^L \\
    &= I_{r_k}
  \end{align*}
  This concludes the proof for the left-orthogonal case; the right-orthogonal case can be proven in a similar manner.
\end{proof}

\subsection{Deriving formulas for cores from formulas of flattenings}

Recall from eq. \eqref{eq_split_correction} that we aim to find a formula for $\P_X^i (\D_V \P_X^j) (Z)$, which is a tensor in the $i$th orthogonal component of $\T_X\Mr$. We can then represent $\P_X^i (\D_V \P_X^j) (Z)$ by a variational core $\delta \widetilde{V}_i$, given by eq. \eqref{param_alt}. Using Lemma \ref{TT} and the inductive definition of $X_{\le k}$, we see that eq. \eqref{param_alt} is equivalent to the following:
\begin{align}
    (\P_X^i (\D_V \P_X^j) (Z))^{<i>} = (I_{n_i} \otimes X_{\le i-1}) \delta \widetilde{V}_i^L \Xt_{\ge i+1}^\top.
\end{align}

We would then hope that if we can find an equation for $(\P_X^i (\D_V \P_X^j) (Z))^{<i>}$ in the form $(I_{n_i} \otimes X_{\le i-1}) B \Xt_{\ge i+1}^\top$, where $B \in \reals^{r_{i-1}n_i \times r_i}$, that we have uniqueness: $B = \delta \widetilde{V}_i^L$. This is indeed the case.

\begin{lemma}\label{core_formula}
If a matrix $B \in \reals^{n_ir_{i-1} \times r_i}$ satisfies $(\P_X^i (\D_V \P_X^j) (Z))^{<i>} = (I_{n_i} \otimes X_{\le i-1})B\Xt_{\ge i+1}^\top$, then $B = \delta \widetilde{V}_i^L$.
\end{lemma}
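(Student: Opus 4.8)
The plan is to reduce the statement to an injectivity fact about the linear map $B \mapsto (I_{n_i} \otimes X_{\le i-1})\, B\, \Xt_{\ge i+1}^\top$ on $\reals^{n_i r_{i-1} \times r_i}$. First I would recall that, since $\P_X^i (\D_V \P_X^j)(Z)$ lies in the $i$th orthogonal component of $\T_X\Mr$, it is represented in the parametrization \eqref{param_alt} by a single variational core $\delta\widetilde{V}_i$ (all other variational cores vanishing), and---as already noted just before the lemma, using Lemma \ref{TT} and the inductive definition of $X_{\le k}$---its $i$th flattening equals $(I_{n_i}\otimes X_{\le i-1})\,\delta\widetilde{V}_i^L\,\Xt_{\ge i+1}^\top$. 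Combining this with the hypothesis on $B$ gives
\[
(I_{n_i}\otimes X_{\le i-1})\,\bigl(B - \delta\widetilde{V}_i^L\bigr)\,\Xt_{\ge i+1}^\top = 0,
\]
and it remains only to conclude $B = \delta\widetilde{V}_i^L$ from this.

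For that I would invoke Lemma \ref{TT-orthogonal}. Since $U_1,\ldots,U_d$ is left-orthogonal, $X_{\le i-1}^\top X_{\le i-1} = I_{r_{i-1}}$, hence $(I_{n_i}\otimes X_{\le i-1})^\top(I_{n_i}\otimes X_{\le i-1}) = I_{n_i}\otimes(X_{\le i-1}^\top X_{\le i-1}) = I_{n_i r_{i-1}}$, so $I_{n_i}\otimes X_{\le i-1}$ admits the left inverse $I_{n_i}\otimes X_{\le i-1}^\top$. Likewise $\widetilde{U}_1,\ldots,\widetilde{U}_d$ is right-orthogonal, so $\Xt_{\ge i+1}^\top \Xt_{\ge i+1} = I_{r_i}$ and $\Xt_{\ge i+1}^\top$ admits the right inverse $\Xt_{\ge i+1}$. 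Multiplying the displayed identity on the left by $I_{n_i}\otimes X_{\le i-1}^\top$ and on the right by $\Xt_{\ge i+1}$ then yields $B - \delta\widetilde{V}_i^L = 0$, as claimed.

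Finally I would dispatch the boundary cases: for $i = 1$ one has $X_{\le 0} = 1$ and for $i = d$ one has $\Xt_{\ge d+1} = 1$, so the corresponding factor is a trivial $1\times 1$ identity, while Lemma \ref{TT-orthogonal} exactly covers the remaining indices, namely $X_{\le 1},\ldots,X_{\le d-1}$ and $\Xt_{\ge 2},\ldots,\Xt_{\ge d}$, that the argument uses. I do not expect a genuine obstacle here; the only points that need a little care are identifying precisely which interface matrices are guaranteed to have orthonormal columns and applying the Kronecker identity $(I\otimes A)^\top(I\otimes A) = I\otimes(A^\top A)$. The heart of the proof is just the elementary observation that pre- and post-composing with matrices that have one-sided inverses is an injective operation.
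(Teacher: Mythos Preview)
Your proposal is correct and matches the paper's own proof essentially line for line: both start from $(I_{n_i}\otimes X_{\le i-1})\,\delta\widetilde{V}_i^L\,\Xt_{\ge i+1}^\top = (I_{n_i}\otimes X_{\le i-1})\,B\,\Xt_{\ge i+1}^\top$ and then left-multiply by $I_{n_i}\otimes X_{\le i-1}^\top$ and right-multiply by $\Xt_{\ge i+1}$, using Lemma~\ref{TT-orthogonal} to cancel. Your explicit handling of the boundary cases $i=1$ and $i=d$ is a nice touch that the paper leaves implicit.
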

\begin{proof}
    From Lemma \ref{TT}, $(\P_X^i (\D_V \P_X^j) (Z))^{<i>} = (I_{n_i} \otimes X_{\le i-1}) \delta \widetilde{V}_i^L \Xt_{\ge i+1}^\top$. Supposing that a small matrix $B$ satisfies $(\P_X^i(Z))^{<i>} = (I_{n_i} \otimes X_{\le i-1})B\Xt_{\ge i+1}^\top$, we then have:
  \begin{align*}
    (I_{n_i} \otimes X_{\le i-1}) \delta \widetilde{V}_i^L \Xt_{\ge i+1}^\top &= (I_{n_i} \otimes X_{\le i-1})B\Xt_{\ge i+1}^\top \\
    (I_{n_i} \otimes X_{\le i-1}^\top X_{\le i-1}) \delta \widetilde{V}_i^L \Xt_{\ge i+1}^\top\Xt_{\ge i+1} &= (I_{n_i} \otimes X_{\le i-1}^\top X_{\le i-1})B\Xt_{\ge i+1}^\top\Xt_{\ge i+1} \\
    \delta \widetilde{V}_i^L &= B.
  \end{align*}
\end{proof}

Thus, finding a formula for $(\P_X^i (\D_V \P_X^j) (Z))^{<i>}$ in the form $(\P_X^i (\D_V \P_X^j) (Z))^{<i>} = (I_{n_i} \otimes X_{\le i-1})B\Xt_{\ge i+1}^\top$ is sufficient to find a formula for $\delta \widetilde{V}_k$.

\section{Proofs for alternative parametrization of $\T_X\Mr$}
\label{alternative}

\begin{lemma}\label{interchange_tangent_param}
    For a tangent vector $V \in \T_X \Mr$, $\delta \widetilde{V}_k^L = \delta V_k^L R_k^\top$, for all $k \in [d-1]$, where $R_k \in \reals^{r_k \times r_k}$ are defined by $X_{\ge k+1} = R_k \widetilde{X}_{\ge k+1}$, are invertible, and are generated by right-orthogonalization of $U_1, \ldots, U_d$. Lastly, $\delta \widetilde{V}_d^L = \delta V_d^L$.
\end{lemma}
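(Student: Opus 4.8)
The plan is to compare the two parametrizations \eqref{variation_init_def} and \eqref{param_alt} of the \emph{same} tangent vector $V$ by extracting a canonical ``piece'' of $V$ in each representation and matching them. First I would fix $k \in [d-1]$ and look at the $k$th summand in each parametrization: in \eqref{variation_init_def} the $k$th term equals $X_{<k}\,\delta V_k\,X_{\ge k+1}^\top$ (reading $\delta V_k$ as its left flattening appropriately), and in \eqref{param_alt} the $k$th term equals $X_{<k}\,\delta\widetilde V_k\,\widetilde X_{\ge k+1}^\top$. The gauge conditions $(\delta V_i^L)^\top U_i^L = 0$ and $(\delta\widetilde V_i^L)^\top U_i^L = 0$ for $i < d$ are what make this decomposition into $d$ pieces canonical (they fix the ambiguity in splitting $V$), so the $k$th pieces must be equal as tensors. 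Concretely, using Lemma \ref{TT} and the recursion $X_{\le k} = (I_{n_k}\otimes X_{\le k-1})U_k^L$, the equality of the $k$th pieces at the level of the $k$th flattening reads
\begin{align*}
    (I_{n_k}\otimes X_{\le k-1})\,\delta V_k^L\, X_{\ge k+1}^\top = (I_{n_k}\otimes X_{\le k-1})\,\delta\widetilde V_k^L\, \widetilde X_{\ge k+1}^\top.
\end{align*}

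Next I would substitute the defining relation $X_{\ge k+1} = R_k \widetilde X_{\ge k+1}$ (equivalently $X_{\ge k+1}^\top = \widetilde X_{\ge k+1}^\top R_k^\top$, where $R_k$ is the small invertible matrix produced as a by-product of right-orthogonalization), turning the left-hand side into $(I_{n_k}\otimes X_{\le k-1})\,\delta V_k^L R_k^\top\,\widetilde X_{\ge k+1}^\top$. Then I would cancel the outer factors: $(I_{n_k}\otimes X_{\le k-1})$ has orthonormal columns by Lemma \ref{TT-orthogonal} (left-orthogonality of $U_1,\ldots,U_d$ gives $X_{\le k-1}^\top X_{\le k-1}=I$, hence $(I\otimes X_{\le k-1})^\top(I\otimes X_{\le k-1})=I$), and $\widetilde X_{\ge k+1}$ has orthonormal columns by the right-orthogonal half of Lemma \ref{TT-orthogonal}. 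Multiplying on the left by $(I_{n_k}\otimes X_{\le k-1}^\top)$ and on the right by $\widetilde X_{\ge k+1}$ kills both outer factors and yields $\delta V_k^L R_k^\top = \delta\widetilde V_k^L$, exactly as claimed — this is the same cancellation trick already used in the proof of Lemma \ref{core_formula}. For $k=d$ there is no right-orthogonalization applied past the last core (the last core is where the ``orthogonal center'' sits), so $\widetilde X_{\ge d+1} = X_{\ge d+1} = 1$ and $R_d$ is trivial; the $d$th pieces match directly, giving $\delta\widetilde V_d^L = \delta V_d^L$.

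The one point that needs care — and the main obstacle — is justifying that the $k$th summands of the two parametrizations must individually coincide, rather than merely the full sums being equal. This is where I would invoke the orthogonal decomposition $\T_X\Mr = \bigoplus_{k=1}^d \P_X^k(\T_X\Mr)$ from \eqref{proj_split}: the $k$th summand of each parametrization lies in the $k$th orthogonal component (this is precisely the content of the $\delta V_k$ / $\delta\widetilde V_k$ gauge conditions, cf.\ the discussion preceding Lemma \ref{core_formula}), so applying $\P_X^k$ to the identity ``$V$ expressed two ways'' isolates the $k$th pieces and forces their equality. Alternatively, and perhaps more cleanly, I would observe that both parametrizations write $(\P_X^k V)^{<k>}$ in the form $(I_{n_k}\otimes X_{\le k-1})\,B\,\widetilde X_{\ge k+1}^\top$ — after rewriting $X_{\ge k+1}^\top = \widetilde X_{\ge k+1}^\top R_k^\top$ in \eqref{variation_init_def} — and then Lemma \ref{core_formula} (uniqueness of such a representation) immediately gives $\delta V_k^L R_k^\top = \delta\widetilde V_k^L$. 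Everything else is the routine Kronecker-product bookkeeping of flattenings and interface matrices already set up in Appendix \ref{prelim_defs}.
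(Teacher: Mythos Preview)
Your proposal is correct and follows essentially the same approach as the paper: identify the $k$th piece of $V$ in each parametrization as $(\P_X^k V)^{<k>}$, substitute $X_{\ge k+1}^\top = \widetilde X_{\ge k+1}^\top R_k^\top$, and then invoke the uniqueness Lemma~\ref{core_formula} to strip the outer factors. The ``alternative'' you sketch at the end is in fact exactly the paper's argument; your first version (cancelling the outer factors directly via Lemma~\ref{TT-orthogonal}) is just the proof of Lemma~\ref{core_formula} unrolled, so the two are the same route.
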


\begin{proof}
    Note that the last statement comes trivially from comparing definitions of the parametrizations. Furthermore, all properties of $R_k$ come from \citep[Algorithm 4.1]{steinlechner2016riemannian}.
    
    Let $V = \P_X(Z)$ for an arbitrary tensor $Z$. Note the following equality between both parametrizations using Lemma \ref{TT}:
    \begin{align*}
    (\P_X^k(Z))^{<k>} = (I_{n_k} \otimes X_{\le k-1}) \delta \widetilde{V}_k^L \tilde{X}_{\ge k+1}^\top = (I_{n_k} \otimes X_{\le k-1}) \delta V_k^L X_{\ge k+1}^\top.
    \end{align*}
    Using the relation $X_{\ge k+1} = \tilde{X}_{\ge k+1} R_{k}$, we then get that:

    \begin{align*}
        (I_{n_k} \otimes X_{\le k-1}) \delta \tilde{V} \tilde{X}_{\ge k+1}^\top &= (I_{n_k} \otimes X_{\le k-1}) \delta V_k^L R_{k+1}^\top \tilde{X}_{\ge k+1}^\top \\
        \delta \tilde{V}_k^L &= \delta V_k^L R_{k+1}^\top
    \end{align*}
    by Lemma \ref{core_formula}.
\end{proof}

Finally, we show a uniqueness result regarding formulas for $\delta \widetilde{V}_k$ from $(\P_X^k(Z))^{<k>}$:

\begin{lemma}\label{core_formula}
    If a small matrix $B \in \reals^{n_ir_{i-1} \times r_i}$ satisfies $(\P_X^k(Z))^{<k>} = (I_{n_i} \otimes X_{\le i-1})B\Xt_{\ge i+1}^\top$, then $B = \delta \widetilde{V}_i^L$.
\end{lemma}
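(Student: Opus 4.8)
The plan is to exploit that the right-orthogonal parametrization \eqref{param_alt} already furnishes one representation of $\P_X^i(Z)$ of exactly the stated shape, and then to cancel the outer factors $(I_{n_i}\otimes X_{\le i-1})$ and $\Xt_{\ge i+1}^\top$ using their orthonormality. First I would recall that, combining \eqref{param_alt} with Lemma \ref{TT} and the recursive definition of the interface matrices, the $i$th orthogonal piece of the projection satisfies
\[
(\P_X^i(Z))^{<i>} = (I_{n_i}\otimes X_{\le i-1})\,\delta\widetilde{V}_i^L\,\Xt_{\ge i+1}^\top ,
\]
where $\delta\widetilde{V}_i$ is the $i$th core of $V = \P_X(Z)$ in the right-orthogonal parametrization. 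Subtracting the hypothesized identity from this one gives
\[
(I_{n_i}\otimes X_{\le i-1})\bigl(\delta\widetilde{V}_i^L - B\bigr)\Xt_{\ge i+1}^\top = 0 .
\]

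Next I would left-multiply by $(I_{n_i}\otimes X_{\le i-1}^\top)$ and right-multiply by $\Xt_{\ge i+1}$. By the mixed-product property of the Kronecker product, $(I_{n_i}\otimes X_{\le i-1}^\top)(I_{n_i}\otimes X_{\le i-1}) = I_{n_i}\otimes (X_{\le i-1}^\top X_{\le i-1})$, and since $U_1,\dots,U_d$ is left-orthogonal, Lemma \ref{TT-orthogonal} gives $X_{\le i-1}^\top X_{\le i-1} = I_{r_{i-1}}$ (for $i=1$ this is the trivial identity $X_{\le 0}=1$), so the left factor collapses to $I_{n_i r_{i-1}}$. Likewise the cores $\widetilde{U}_k$ are right-orthogonal, so Lemma \ref{TT-orthogonal} gives $\Xt_{\ge i+1}^\top \Xt_{\ge i+1} = I_{r_i}$. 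Hence both outer factors vanish and one is left with $\delta\widetilde{V}_i^L - B = 0$, i.e. $B = \delta\widetilde{V}_i^L$, which is the claim.

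I expect no serious obstacle; the only care needed is bookkeeping: one must invoke left-orthonormality of the $X_{\le\bullet}$ interface matrices on the left and right-orthonormality of the $\Xt_{\ge\bullet}$ interface matrices on the right (not to confuse the two), and one should note the degenerate endpoints — $i=1$, where the left factor is just $I_{n_1}$, and $i=d$, where $\Xt_{\ge d+1}=1$ — so that the cancellation goes through verbatim in all cases.
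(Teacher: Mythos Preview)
Your proposal is correct and follows essentially the same approach as the paper: both start from the representation $(\P_X^i(Z))^{<i>} = (I_{n_i}\otimes X_{\le i-1})\,\delta\widetilde{V}_i^L\,\Xt_{\ge i+1}^\top$ and then cancel the outer factors by left-multiplying with $(I_{n_i}\otimes X_{\le i-1}^\top)$ and right-multiplying with $\Xt_{\ge i+1}$, using the orthonormality of the interface matrices. Your write-up is in fact slightly more careful than the paper's, explicitly citing Lemma~\ref{TT-orthogonal} and noting the degenerate endpoint cases.
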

\begin{proof}
    From Lemma \ref{TT}, $(\P_X^k(Z))^{<k>} = (I_{n_i} \otimes X_{\le i-1}) \delta \widetilde{V}_i^L \Xt_{\ge i+1}^\top$. Supposing that a small matrix $B$ satisfies $(\P_X^i(Z))^{<i>} = (I_{n_i} \otimes X_{\le i-1})B\Xt_{\ge i+1}^\top$, we then have:
    \begin{align*}
    (I_{n_i} \otimes X_{\le i-1}) \delta \widetilde{V}_i^L \Xt_{\ge i+1}^\top &= (I_{n_i} \otimes X_{\le i-1})B\Xt_{\ge i+1}^\top \\
    (I_{n_i} \otimes X_{\le i-1}^\top X_{\le i-1}) \delta \widetilde{V}_i^L \Xt_{\ge i+1}^\top\Xt_{\ge i+1} &= (I_{n_i} \otimes X_{\le i-1}^\top X_{\le i-1})B\Xt_{\ge i+1}^\top\Xt_{\ge i+1} \\
    \delta \widetilde{V}_i^L &= B,
    \end{align*}
    which completes the proof.
\end{proof}

\section{Proofs for the correction term}\label{proofs_correction}
In this section, we prove an explicit formula for the Riemannian Hessian of $\Mr$. We take the definition of the Riemannian Hessian to be that given in eq. \eqref{riemannian_hessian}.
\subsection{Extending $(\D_V \P_X^i) (Z)$ to a standard differential} \label{section_diff}

We start from the differential of the full orthogonal projector, $(\D_V \P_X) (Z)$. Recall the definition of $(\D_V \P_X) (Z)$ from the main paper:
\begin{align}\label{differential_original}
	(\D_V \P_X) Z = \lim_{t \to 0} \frac{\P_{c(t)}(Z) - \P_{c(0)}(Z)}{t}
\end{align}
where $c(t)$ is a smooth curve on $\Mr$ such that $c(0) = X$ and $c'(0) = V$. Recall from eq. \eqref{proj_split} that, given a left-orthogonal and minimal decomposition $U_1, \ldots, U_d$ of $X$, we can split $\P_X(Z)$ into the sum $\sum_{i=1}^d\P_X^i(Z)$, where each $\P_X^i(Z)$ is given by eq. \eqref{proj_explicit} and \eqref{proj_d}. Thus, to be able to make this split over the curve $c(t)$, we want to construct $c(t)$ as a ``curve of cores'' in the following way.

Note that each of the cores $U_k$ of a TT-decomposition lives in the linear space $\reals^{r_{k-1} \times n_k \times r_k}$. We construct a smooth map from $(\reals^{r_0 \times n_1 \times r_1}) \times (\reals^{r_1 \times n_2 \times r_2}) \times \cdots \times (\reals^{r_d \times n_d \times r_{d+1}})$ to $\reals^{n_1 \times \cdots \times n_d}$ through eq. \eqref{TT_definition} of the main paper:
\begin{center}
    $M_{TT}(W_1, W_2, \ldots, W_d)$ is a tensor of size $(n_1, \ldots, n_d)$ with value $W_1(i_1)W_2(i_2) \cdots W_d(i_d)$ at index $(i_1, \ldots, i_d)$
\end{center}
We denote the input space $L_{\mathrm{cores}}$ and the resulting map $M_{TT} \colon L_{\mathrm{cores}} \to \reals^{n_1 \times \cdots \times n_d}$. Note that $M_{TT}$ is surjective to $\Mr$ but not injective. Then the curve $c_{TT}(t) := M_{TT}(c_1(t), c_2(t), \ldots, c_d(t))$, where $c_k(t)$ is a smooth curve on $\reals^{r_{k-1} \times n_k \times r_k}$, is a smooth curve in $\reals^{n_1 \times \cdots \times n_d}$. 

Finally, we can write eq. \eqref{differential_original} as the following:
\begin{align}\label{differential_split}
	(\D_V \P_X) Z = \sum_{i=1}^d\lim_{t \to 0} \frac{\P_{c_{TT}(t)}^i(Z) - \P_{c_{TT}(0)}^i(Z)}{t}
\end{align}
where $c_{TT}(t)$ is defined as above, $c_{TT}(0) = X$, $c_{TT}'(0) = V$, and for every fixed $t$ in some small neighborhood $(-\epsilon, \epsilon)$, $c_1(t), \ldots, c_d(t)$ is a left-orthogonal, minimal TT-decomposition and $c_{TT}(t) \in \Mr$. We now establish the existence of such a curve $c_{TT}(t)$:
\begin{lemma}\label{core_curve_lemma}
    There exists a smooth curve $c_{TT}(t)$ on $\Mr$ such that $c_{TT}(0) = X$, $c_{TT}'(0) = V$, and for every fixed $t \in (-\epsilon, \epsilon)$, where $\epsilon > 0$ is fixed, we have that $c_1(t), \ldots, c_d(t)$ is a minimal and left-orthogonal TT-decomposition, where $c_{TT}(t)$ is of the following form:
    \begin{align}
        c_{TT}(t) = M_{TT}(U_1 + t\delta V_1 + O(t^2), \ldots, U_{d-1} + t\delta V_{d-1} + O(t^2), U_d + t\delta V_d).
    \end{align}
\end{lemma}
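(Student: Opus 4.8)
The goal is to produce, from the tangent vector $V$ parametrized by $(\delta V_1, \ldots, \delta V_d)$ as in eq.~\eqref{variation_init_def}, a curve of cores $(c_1(t), \ldots, c_d(t))$ whose image under $M_{TT}$ is a curve on $\Mr$ with the prescribed value and velocity at $t=0$, and which is left-orthogonal and minimal for all small $t$. The natural first attempt is the linear curve $c_k(t) = U_k + t\,\delta V_k$. This immediately gives $c_{TT}(0) = X$; and differentiating $M_{TT}(c_1(t), \ldots, c_d(t))$ at $t = 0$ by the product rule reproduces exactly eq.~\eqref{variation_init_def}, so $c_{TT}'(0) = V$. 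It also keeps $c_{TT}(t) \in \Mr$ for $t$ in a neighbourhood of $0$: minimality of a TT decomposition is an open condition (the ranks of the flattenings $X^{<k>}$ are lower semicontinuous in $X$ and, by Theorem~\ref{minimal_decomp}, cannot exceed $r_k$ when $X$ is represented with cores of size $\mathbf r$), so a minimal decomposition at $t=0$ stays minimal nearby, and in particular $\rank_{\mathrm{TT}}(c_{TT}(t)) = \mathbf r$. The single defect of the linear curve is left-orthogonality: the gauge condition $(\delta V_k^L)^\top U_k^L = 0$ for $k \in [d-1]$ only guarantees $(c_k^L(t))^\top c_k^L(t) = I_{r_k} + t^2 (\delta V_k^L)^\top \delta V_k^L$, which is $I_{r_k} + O(t^2)$ but not exactly $I_{r_k}$.

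\textbf{Fixing orthogonality.} The plan is to correct the first $d-1$ cores by an $O(t^2)$ perturbation that restores exact left-orthogonality without disturbing the value or the first-order behaviour of the curve. Concretely, for $k \in [d-1]$ set
\begin{align*}
    c_k(t) = \big(U_k^L + t\,\delta V_k^L\big)\big(I_{r_k} + t^2 (\delta V_k^L)^\top \delta V_k^L\big)^{-1/2},
\end{align*}
reshaped back to a third-order tensor (so $c_k(t) = U_k + t\,\delta V_k + O(t^2)$), and leave $c_d(t) = U_d + t\,\delta V_d$ unchanged. The matrix $I_{r_k} + t^2(\delta V_k^L)^\top\delta V_k^L$ is symmetric positive definite for all $t$, hence has a smooth symmetric positive-definite square root (and inverse square root) by the smoothness of the matrix square root on positive-definite matrices; this makes $t \mapsto c_k(t)$ smooth. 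Using $(\delta V_k^L)^\top U_k^L = 0$ one checks $c_k^L(t)^\top c_k^L(t) = (I+t^2 S_k)^{-1/2}(I + t^2 S_k)(I + t^2 S_k)^{-1/2} = I_{r_k}$ with $S_k := (\delta V_k^L)^\top \delta V_k^L$, so each corrected core is exactly left-orthogonal. Since the correction factor is $I + O(t^2)$, we still have $c_k(t) = U_k + t\delta V_k + O(t^2)$, so $c_{TT}(0) = X$ and the product rule still gives $c_{TT}'(0) = V$; and for $k=d$ nothing changed. The cores remain minimal for small $t$ by the openness argument above, so $c_{TT}(t) \in \Mr$ and the stated form of $c_{TT}(t)$ holds.

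\textbf{Main obstacle.} The only real subtlety is the claim that the decomposition stays \emph{minimal} for all small $t$ — i.e.\ that shrinking ranks is impossible on a neighbourhood. This needs lower semicontinuity of matrix rank together with Theorem~\ref{minimal_decomp}'s upper bound $\rank(c_{TT}(t)^{<k>}) \le r_k$: the lower bound at $t=0$ is $r_k$, semicontinuity forbids a drop nearby, and the size-$\mathbf r$ representation forbids an increase, so the rank is pinned at $r_k$. Everything else is a direct computation: verifying $c_{TT}'(0) = V$ via the product rule matches eq.~\eqref{variation_init_def}, and verifying exact left-orthogonality via the algebraic identity above using the gauge condition. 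I would also note explicitly that the smoothness of $t \mapsto (I + t^2 S_k)^{-1/2}$ is what makes $c_{TT}$ a genuinely smooth curve, so that eq.~\eqref{differential_split} — splitting the differential core-by-core along this curve — is legitimate.
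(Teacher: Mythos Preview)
Your proof is correct and follows essentially the same strategy as the paper: produce left-orthogonal core curves $c_k(t)$ for $k\in[d-1]$ via the Stiefel manifold structure (you give the explicit polar retraction $(U_k^L+t\,\delta V_k^L)(I+t^2 S_k)^{-1/2}$, whereas the paper simply invokes the existence of a Stiefel curve with the right initial data), take $c_d(t)=U_d+t\,\delta V_d$, and then argue minimality is an open condition. The only stylistic difference is in the minimality step: the paper works at the level of the cores (minimality $\Leftrightarrow$ each $c_k^L(t)$ and $c_k^R(t)$ has full rank, and full rank is open), while you work at the level of the assembled tensor (rank of $c_{TT}(t)^{<k>}$ is $\ge r_k$ by lower semicontinuity and $\le r_k$ by Theorem~\ref{minimal_decomp}); both arguments are valid and yield the same conclusion.
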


Before starting the proof for Lemma \ref{core_curve_lemma}, it is important to note that we represent $V$ by the original parametrization $\delta V_1, \ldots, \delta V_d$, i.e. from the formula 
\begin{align}\label{first_param}
    V(i_1, \ldots, i_d) &= \sum_{k=1}^d U_1(i_1) \cdots U_{k-1}(i_{k-1}) \delta V_k(i_k) U_{k+1}(i_{k+1}) \cdots U_d(i_d).
\end{align}
This is in contrast to the alternative parametrization $\delta \widetilde{V}_1, \ldots, \delta \widetilde{V}_d$ from which we get the formulas \eqref{proj_explicit} and \eqref{proj_d} for the orthogonal projector. We use the original parametrization because it is more intuitive to construct a curve $c_{TT}(t)$ such that $c_{TT}'(0) = V$ using $\delta V_1, \ldots, \delta V_d$ due to the resemblance of eq. \eqref{first_param} to the product rule. We now prove a note from Section \ref{alternative} that we are able to easily interchange between these two parametrizations:

Thus, the original parametrization $\delta V_1, \ldots, \delta V_d$ is always accessible from the parametrization $\delta \widetilde{V}_1, \ldots, \delta \widetilde{V}_d$. We now move on to prove Lemma \ref{core_curve_lemma}. 
\begin{proof}
    First, we will construct a curve $c_{TT}(t)$ that satisfies all requirements except for $c_1(t), \ldots, c_d(t)$ to be minimal. Then, we will simply choose $\epsilon$ small enough such that $c_1(t), \ldots, c_d(t)$ is also minimal for $t \in (-\epsilon, \epsilon)$.

    We start by constructing a curve satisfying left-orthogonality. Left-orthogonality requires that $(c_k(t))^L$ has orthonormal columns for all $k \in [d-1]$. Note here that we can use the structure of a different manifold, the \textit{Stiefel manifold}:
    \begin{align*}
        \mathrm{St}(n,p) = \left \{ M \in \reals^{n \times p} : M^\top M = I_p \right\}
    \end{align*}
    where the tangent space $\T_M \mathrm{St}(n,p)$ at a point $M$ is the set of all matrices $A$ such that $M^\top A + A^\top M = 0$. More details on the Stiefel manifold can be found in \cite[\S 7.3]{boumal2020intromanifolds}. Note that for every $k \in [d-1]$, $U_k^L \in \mathrm{St}(r_{k-1} n_k, r_k)$ by left-orthogonality. Furthermore, $(\delta V_k^L)^\top U_k^L = 0$ (called the \textit{gauge conditions}) by definition of the parametrization $\delta V_1, \ldots, \delta V_d$ of $V$, and thus $(\delta V_k^L)^\top U_k^L + (U_k^L)^\top \delta V_k^L = 0 \implies \delta V_k^L \in \T_{U_k^L} \mathrm{St}(r_{k-1} n_k, r_k)$. This then allows us to construct a smooth curve $(c_k(t))^L = U_k^L + t\delta V_k^L + O(t^2)$ such that, for all $t$ in some small neighborhood $(-\epsilon_0, \epsilon_0)$, we have that $(c_k(t))^L \in \mathrm{St}(r_{k-1} n_k, r_k)$. Construct $c_k(t)$ by unfolding $(c_k(t))^L$ back into a tensor of order 3 so that $c_k(t) = U_k + t\delta V_k + O(t^2)$, repeat this construction for all $k \in [d-1]$, and finally constuct $c_d(t) = U_d + t \delta V_d$.

    It is easy to verify that the resulting curve $c_{TT}(t) = M_{TT}(U_1 + t\delta V_1 + O(t^2), \ldots, U_{d-1} + t\delta V_{d-1} + O(t^2), U_d + t\delta V_d)$ satisfies $c_{TT}(0) = X$ and $c_{TT}'(0) = V$, since $M_{TT}$ is a multilinear map. We are then left to prove that there exists a neighborhood $(-\epsilon, \epsilon)$ small enough such that $c_1(t), \ldots, c_d(t)$ is minimal for all $(-\epsilon, \epsilon)$. There is a statement in \citep[\S 9.3.3]{uschmajew2020geometrylowrank} stating that the decomposition $c_1(t), \ldots, c_d(t)$ is minimal if and only if $(c_k(t))^L$ and $(c_k(t))^R$ are both of full rank for all $k \in [d]$. This statement can be proven using Lemma \ref{TT} and rank arguments, and it is useful because the set of full rank matrices is an open set. Therefore, for each core $U_k$, we can construct an open ball around $U_k$ in the space $\reals^{r_{k-1} \times n_k \times r_k}$ of some positive radius $\epsilon_L^k$, denoted $\mathcal{N}_{\epsilon_L^k}(U_k)$, such that all tensors in $\mathcal{N}_{\epsilon_L^k}(U_k)$ have left-flattenings of full rank. Similarly, we can construct an open ball $\mathcal{N}_{\epsilon_R^k}(U_k)$ for each core $U_k$ such that all tensors in $\mathcal{N}_{\epsilon_R^k}(U_k)$ have right-flattenings of full rank. Defining $\epsilon_k := \min(\epsilon_L^k, \epsilon_R^k)$, it then follows that if a TT-tensor $W$ has cores $W_1, \ldots, W_d$ such that $W_k \in \mathcal{N}_{\epsilon^k}(U_k)$ for all $k \in [d]$, then $W_1, \ldots, W_d$ is a minimal decomposition, and $W \in \Mr$. Since each $c_k(t)$ is a smooth curve, there exists $\delta_k > 0$ such that $t \in (-\delta_k, \delta_k) \implies c_k(t) \in \mathcal{N}_{\epsilon^k}(U_k)$. Setting $\epsilon := \min (\delta_1, \ldots, \delta_d, \epsilon_0)$ finishes the proof. 
\end{proof}

Now that we have constructed $c_{TT}$ to be a valid curve, the following lemma comes trivially:

\begin{lemma} \label{standard_diff}
    Let $X \in \Mr$ and $V \in \T_X \Mr$. The following identity holds:
  \begin{align}
        (\D_V \P_X^i) Z = \lim_{t \to 0} \frac{\P^{i, Z}(c_{TT}(t)) - \P^{i, Z}(c_{TT}(0))}{t}
  \end{align}
\end{lemma}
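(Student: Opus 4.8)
The plan is to recognize that, once Lemma~\ref{core_curve_lemma} is in hand, this statement is essentially a re-reading of eq.~\eqref{differential_split}. First I would make explicit the map hiding behind the notation $\P^{i,Z}$: on the neighborhood $(-\epsilon,\epsilon)$ supplied by Lemma~\ref{core_curve_lemma}, each $c_{TT}(t)$ comes equipped with the minimal, left-orthogonal decomposition $c_1(t),\ldots,c_d(t)$, so $\P^{i,Z}(c_{TT}(t))$ is well-defined as the $i$th projector component $\P^i_{c_{TT}(t)}(Z)$ computed from those cores via formulas~\eqref{proj_explicit}--\eqref{proj_d}, with $Z$ held fixed. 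This is precisely the ``standard differential'' viewpoint promised by the section title: $\P^{i,Z}$ is a map from a curve of cores into $\reals^{n_1 \times \cdots \times n_d}$.

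With that in place, I would argue as follows. By Lemma~\ref{core_curve_lemma}, $c_{TT}$ is a smooth curve on $\Mr$ with $c_{TT}(0)=X$ and $c_{TT}'(0)=V$ whose decomposition $c_1(t),\ldots,c_d(t)$ stays minimal and left-orthogonal for every $t\in(-\epsilon,\epsilon)$; this is exactly the hypothesis under which the split $\P_{c_{TT}(t)} = \sum_{i=1}^d \P^i_{c_{TT}(t)}$ of eq.~\eqref{proj_split} is valid at every such $t$, and it is the curve with respect to which $(\D_V\P_X^i)(Z)$ was introduced in eq.~\eqref{differential_split}. Reading the $i$th summand of eq.~\eqref{differential_split} and substituting $\P^{i,Z}(c_{TT}(t)) = \P^i_{c_{TT}(t)}(Z)$ then yields the claimed identity. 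The one point that merits a line of justification is that the $i$th limit exists on its own --- a priori only the sum over $i$ is known to converge, since only $\P_X$, not its individual pieces, is intrinsic to $X$. For this I would note that the right-hand sides of \eqref{proj_explicit}--\eqref{proj_d} are smooth functions of the cores $c_1(t),\ldots,c_d(t)$: the interface matrices $X_{<k}$, the orthogonal projections $U_k^L(U_k^L)^\top$, the right-orthogonalized interface matrices $\Xt_{\ge k+1}$, and the matrices $R_k$ all depend smoothly on the cores on $(-\epsilon,\epsilon)$, because the successive thin QR (equivalently, polar) factorizations realizing \cite[Alg.~4.1]{steinlechner2016riemannian} are smooth on the open set of full-rank matrices, and all relevant flattenings remain full rank along $c_{TT}$ by Lemma~\ref{core_curve_lemma}. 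Hence $t \mapsto \P^{i,Z}(c_{TT}(t))$ is differentiable at $0$, and its derivative is $(\D_V\P_X^i)(Z)$.

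I do not anticipate a genuine obstacle here: once Lemma~\ref{core_curve_lemma} is available, the statement is a bookkeeping identity, and the only mild technical care needed is the smooth-dependence-on-cores observation above, which also legitimizes treating $\P^{i,Z}(\cdot)$ as a differentiable map. The real work --- exhibiting a smooth, minimal, left-orthogonal curve of cores realizing the tangent vector $V$ --- has already been carried out in Lemma~\ref{core_curve_lemma}, which is why this lemma follows trivially.
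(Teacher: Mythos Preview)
Your proposal is correct and matches the paper's approach: the paper gives no proof at all, stating only that ``Now that we have constructed $c_{TT}$ to be a valid curve, the following lemma comes trivially.'' Your write-up supplies the bookkeeping the paper omits, including the additional (and welcome) remark that each summand in eq.~\eqref{differential_split} is separately differentiable because the interface matrices and the right-orthogonalization depend smoothly on the cores along $c_{TT}$.
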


\subsection{Proofs for the correction term of the Riemannian Hessian}

We can now find a formula for $(\D_V \P_X^i) Z$. Recalling equations \eqref{proj_explicit} and \eqref{proj_d}, we can evaluate $(\D_V \P_X^i) Z$ by evaluating the derivatives of the terms $X_{\le k}X_{\le k}^\top$ and $\Xt_{>k} \Xt_{>k}^\top$. We then write $\D_V (X_{\le k}X_{\le k}^\top)$ to mean the following:
\begin{equation*}
    \D_V (X_{\le k}X_{\le k}^\top) := \lim_{t \to 0} \frac{X_{\le k}(c_{TT}(t)) X_{\le k}^\top(c_{TT}(t)) - X_{\le k}(c_{TT}(0)) X_{\le k}^\top(c_{TT}(0))}{t}
\end{equation*}
where $X_{\le k}(c_{TT}(t))$ is the $k$th left interface matrix using cores from $c_{TT}(t)$. We now build up derivations for $D_V (X_{\le k}X_{\le k}^\top)$ and $D_V (X_{\le k}X_{\le k}^\top)$.

\begin{lemma}\label{interface_derivative}
  Let $X \in \Mr$ and $V \in \T_X \Mr$. The following identities hold for all $k \in [d]$:
  \begin{enumerate} 
    \item $\D_V (X_{\le k}) = V_{\le k}$ \label{interface_derivative_left}
    \item $\D_V (X_{\ge k}) = V_{\ge k}$ \label{interface_derivative_right}
    \end{enumerate}
\end{lemma}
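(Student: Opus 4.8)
The plan is to prove both identities simultaneously by induction along the recursive definitions of the interface matrices (Definition \ref{def:interface}) and the variational interface matrices (Definition \ref{variational_interface}). The key observation is that the recursion for $X_{\le k}$ is built from the single core $U_k$ in a multilinear way, and the curve $c_{TT}(t)$ supplied by Lemma \ref{core_curve_lemma} has $k$-th core $c_k(t) = U_k + t\,\delta V_k + O(t^2)$. So differentiating the interface recursion at $t=0$ should, by the product rule, reproduce exactly the two-term recursion defining $V_{\le k}$.

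For the left interfaces, the base case is $X_{\le 0} = 1$ (a constant), so $\D_V(X_{\le 0}) = 0 = V_{\le 0}$. For the inductive step, I would start from the recursion $X_{\le k}(c_{TT}(t)) = (I_{n_k} \otimes X_{\le k-1}(c_{TT}(t)))\,(c_k(t))^L$, then apply the product rule for the derivative of a bilinear expression in $t$:
\begin{align*}
\D_V(X_{\le k}) &= \left(I_{n_k} \otimes \D_V(X_{\le k-1})\right) U_k^L + \left(I_{n_k} \otimes X_{\le k-1}\right) \D_V\!\left((c_k(t))^L\right)\big|_{t=0}.
\end{align*}
Since $c_k(t) = U_k + t\,\delta V_k + O(t^2)$ we have $\D_V((c_k(t))^L)|_{t=0} = \delta V_k^L$, and by the induction hypothesis $\D_V(X_{\le k-1}) = V_{\le k-1}$. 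Substituting gives $\D_V(X_{\le k}) = (I_{n_k} \otimes V_{\le k-1})U_k^L + (I_{n_k}\otimes X_{\le k-1})\delta V_k^L$, which is precisely the recursive formula for $V_{\le k}$ from Definition \ref{variational_interface}. The right interfaces are handled symmetrically: base case $X_{\ge d+1} = 1$ gives $\D_V(X_{\ge d+1}) = 0 = V_{\ge d+1}$, and differentiating $X_{\ge k}^\top(c_{TT}(t)) = (c_k(t))^R\,(X_{\ge k+1}^\top(c_{TT}(t)) \otimes I_{n_k})$ via the product rule reproduces the recursion $V_{\ge k}^\top = U_k^R(V_{\ge k+1}^\top \otimes I_{n_k}) + \delta V_k^R(X_{\ge k+1}^\top \otimes I_{n_k})$.

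The only genuine subtlety — and the step I would be most careful about — is justifying that the derivatives in question exist and that $\D_V$ commutes with the Kronecker products and matrix multiplications appearing in the recursion. This is where Lemma \ref{core_curve_lemma} does the real work: it guarantees a smooth curve $c_{TT}(t)$ on $\Mr$ with $c_{TT}(0) = X$, $c_{TT}'(0) = V$, whose cores vary smoothly and stay minimal and left-orthogonal, so that $X_{\le k}(c_{TT}(t))$ is a smooth (in fact polynomial, at leading order) function of $t$ and all the formal manipulations above are legitimate applications of the product/chain rule for smooth curves. I should also note that the resulting expression is independent of the particular curve chosen, since $\D_V$ of a smooth function on $\Mr$ depends only on $V = c_{TT}'(0)$; alternatively, the whole argument can be read purely formally as differentiating the multilinear map $M_{TT}$, which makes the curve-independence manifest. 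Modulo these standard remarks, the induction is routine bookkeeping.
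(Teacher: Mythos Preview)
Your proposal is correct and follows essentially the same inductive argument as the paper: differentiate the recursive definition of $X_{\le k}$ along the curve $c_{TT}(t)$ from Lemma~\ref{core_curve_lemma}, apply the product rule, and match against Definition~\ref{variational_interface}. The only cosmetic difference is that you anchor the induction at $k=0$ (the constant $X_{\le 0}=1$) whereas the paper starts at $k=1$; your added remarks on smoothness and curve-independence are sound but the paper treats these as routine.
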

\begin{proof}
  We will only write a proof for part \ref{interface_derivative_left}, as the proof for part \ref{interface_derivative_left} is analogous.

  We proceed via induction. Note that by construction of $c_{TT}(t)$, it follows that for all $k \in [d]$, $\D_V(U_k^L) = \delta V_k^L$. This covers the base case, since $\D_V (X_{\le 1}) = \D_V(U_1^L) = \delta V_1^L = V_{\le 1}$. Now, assuming the statement holds for $k-1$, we conclude that $\D_V(I_{n_k} \otimes X_{\le k-1}) = I_{n_k} \otimes V_{\le k-1}$ and that:

  \begin{align*}
    \D_V (X_{\le k}) &= \D_V \left((I_{n_k} \otimes X_{\le k-1})U_k^L \right) \\
    &= \D_V(I_{n_k} \otimes X_{\le k-1})U_k^L + (I_{n_k} \otimes X_{\le k-1})\D_V(U_k^L) \\
    &= (I_{n_k} \otimes V_{\le k-1})U_k^L + (I_{n_k} \otimes X_{\le k-1}) \delta V_k^L = V_{\le k}.
  \end{align*}
\end{proof}

Using Lemma \ref{interface_derivative}, we can then evaluate $\D_V (X_{\le k}X_{\le k}^\top)$ through a standard application of the product rule.
\begin{lemma} \label{eq_left_variational}
    The following identity holds:
    \begin{equation}
        \D_V (X_{\le k}X_{\le k}^\top) = V_{\le k}X_{\le k}^\top + X_{\le k}V_{\le k}^\top
    \end{equation}
\end{lemma}

Next, we evaluate $\D_V (\Xt_{> k}\Xt_{> k}^\top)$. For this differential, we need the following useful cancellation identity:
Furthermore, the left variational interface matrices $V_{\le k}$ in particular have an important cancellation identity.
\begin{lemma}\label{variational_cancellation}
    For all $k \in [d-1]$, the following cancellation identity holds: $V_{\le k}^\top X_{\le k} = 0$.
\end{lemma}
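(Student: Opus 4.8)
The plan is to prove $V_{\le k}^\top X_{\le k} = 0$ by induction on $k$, directly from the recursive definitions of $X_{\le k}$ (Definition~\ref{def:interface}) and $V_{\le k}$ (Definition~\ref{variational_interface}), together with three facts already available: left-orthogonality $(U_k^L)^\top U_k^L = I_{r_k}$ (Definition~\ref{LR-flattening}), the column-orthonormality $X_{\le k-1}^\top X_{\le k-1} = I_{r_{k-1}}$ from Lemma~\ref{TT-orthogonal}, and the gauge conditions $(\delta V_k^L)^\top U_k^L = 0$, valid for $k \in [d-1]$ by the defining property of the parametrization $\{\delta V_k\}$ in eq.~\eqref{variation_init_def}.

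First I would dispatch the base case $k = 1$: since $V_{\le 0} = 0$ and $X_{\le 0} = 1$, the recursions collapse to $V_{\le 1} = \delta V_1^L$ and $X_{\le 1} = U_1^L$, so $V_{\le 1}^\top X_{\le 1} = (\delta V_1^L)^\top U_1^L$, which vanishes by the gauge condition.

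Next, for the inductive step with $2 \le k \le d-1$, assume $V_{\le k-1}^\top X_{\le k-1} = 0$. Substituting $X_{\le k} = (I_{n_k} \otimes X_{\le k-1}) U_k^L$ and $V_{\le k} = (I_{n_k} \otimes V_{\le k-1}) U_k^L + (I_{n_k} \otimes X_{\le k-1}) \delta V_k^L$ into $V_{\le k}^\top X_{\le k}$, I would expand the product, move transposes through the Kronecker factors (using $(A \otimes B)^\top = A^\top \otimes B^\top$ and $(A \otimes B)(C \otimes D) = AC \otimes BD$), and collect the two resulting terms as $(U_k^L)^\top\bigl(I_{n_k} \otimes V_{\le k-1}^\top X_{\le k-1}\bigr) U_k^L$ and $(\delta V_k^L)^\top\bigl(I_{n_k} \otimes X_{\le k-1}^\top X_{\le k-1}\bigr) U_k^L$. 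The first term is zero by the induction hypothesis; in the second, Lemma~\ref{TT-orthogonal} gives $X_{\le k-1}^\top X_{\le k-1} = I_{r_{k-1}}$, so it reduces to $(\delta V_k^L)^\top U_k^L = 0$ by the gauge condition (legitimate since $k \le d-1$). Hence $V_{\le k}^\top X_{\le k} = 0$, closing the induction.

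There is no real obstacle here; the proof is a short induction. The only points needing care are the routine Kronecker-product transpose bookkeeping and confirming that the two auxiliary identities are invoked only in their valid index ranges — the gauge condition at index $k \le d-1$ and Lemma~\ref{TT-orthogonal} at index $k-1 \le d-2$ — which is automatic because the claim itself is restricted to $k \in [d-1]$.
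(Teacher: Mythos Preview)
Your proposal is correct and matches the paper's proof essentially line for line: induction on $k$, base case from the gauge condition, and the inductive step via the recursive definitions, Kronecker-product algebra, the induction hypothesis, and $X_{\le k-1}^\top X_{\le k-1} = I_{r_{k-1}}$ to reduce the remaining term to $(\delta V_k^L)^\top U_k^L = 0$. The only addition is your explicit bookkeeping on Kronecker identities and index ranges, which the paper leaves implicit.
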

\begin{proof}
    We prove by induction. The base case $k=1$ is given directly by the gauge conditions, since $V_{\le 1}^\top X_{\le 1} = (\delta V_1^L)^\top U_1^L = 0$. Assuming the desired cancellation holds for $k-1$, we expand $V_{\le k}^\top X_{\le k}$ inductively via definitions to get:
    \begin{align*}
        &\quad \left( (I_{n_k} \otimes V_{\le k-1})U_k^L + (I_{n_k} \otimes X_{\le k-1}){\delta V}_k^L \right)^\top \left( (I_{n_k} \otimes X_{\le k-1})U_k^L \right) \\
        &= (U_k^L)^\top(I_{n_k} \otimes V_{\le k-1}^\top X_{\le k-1})U_k^L + ({\delta V}_k^L)^\top(I_{n_k} \otimes X_{\le k-1}^\top X_{\le k-1})U_k^L \\
        &= 0 + ({\delta V}_k^L)^\top U_k^L = 0
    \end{align*}
\end{proof}
\begin{lemma} \label{eq_right_variational}
The following equation holds:
  \begin{align*}
    \D_V (\Xt_{\ge k+1}\Xt_{\ge k+1}^\top) &= (I_{n_{k+1}\cdots n_d} - \Xt_{\ge k+1} \Xt_{\ge k+1}^\top)V_{\ge k+1} R_k^{-1} \Xt_{\ge k+1}^\top \\
    &\quad + \Xt_{\ge k+1} R_k^{-\top} V_{\ge k+1}^\top(I_{n_{k+1}\cdots n_d} - \Xt_{\ge k+1} \Xt_{\ge k+1}^\top)
  \end{align*}
\end{lemma}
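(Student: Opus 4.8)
The obstacle to differentiating $\Xt_{\ge k+1}\Xt_{\ge k+1}^\top$ directly is that $\Xt_{\ge k+1}$ is produced by right-orthogonalization (a QR-type procedure), so $\D_V \Xt_{\ge k+1}$ has no clean closed form. The plan is to sidestep this: since $\Xt_{\ge k+1}^\top \Xt_{\ge k+1} = I_{r_k}$ (Lemma \ref{TT-orthogonal}) and $X_{\ge k+1} = \Xt_{\ge k+1} R_k$ with $R_k$ invertible, the matrices $\Xt_{\ge k+1}$ and $X_{\ge k+1}$ share the same column space, so $\Xt_{\ge k+1}\Xt_{\ge k+1}^\top$ is nothing but the orthogonal projector onto $\operatorname{col}(X_{\ge k+1})$, i.e.
\[
    \Xt_{\ge k+1}\Xt_{\ge k+1}^\top = X_{\ge k+1}\bigl(X_{\ge k+1}^\top X_{\ge k+1}\bigr)^{-1}X_{\ge k+1}^\top .
\]
The point of the right-hand side is that $\D_V(X_{\ge k+1}) = V_{\ge k+1}$ is the clean identity supplied by Lemma \ref{interface_derivative}.

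Next I would differentiate the right-hand side along the curve $c_{TT}(t)$ of Lemma \ref{core_curve_lemma}. Write $A(t) := X_{\ge k+1}(c_{TT}(t))$ and $G(t) := A(t)^\top A(t)$. Because $c_{TT}$ stays minimal (Lemma \ref{core_curve_lemma}), $A(t)$ keeps full column rank near $t=0$, so $G(t)$ stays invertible and $t \mapsto G(t)^{-1}$ is smooth there; this is exactly where minimality of the curve is used. Applying the product rule together with $\tfrac{d}{dt}G^{-1} = -G^{-1}\dot G G^{-1}$ and $\dot G = \dot A^\top A + A^\top \dot A$, and then using $A G^{-1}A^\top = \Xt_{\ge k+1}\Xt_{\ge k+1}^\top =: P$ and $G^{-1}A^\top A = I$ to collapse the cross terms, one arrives at the standard formula for the derivative of the projector onto a column space,
\[
    \D_V\!\bigl(\Xt_{\ge k+1}\Xt_{\ge k+1}^\top\bigr) = (I - P)\,\dot A\, A^{+} + (A^{+})^\top \dot A^\top (I - P), \qquad A^{+} := (A^\top A)^{-1}A^\top .
\]

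Finally I would evaluate the ingredients at $t=0$. Here $\dot A = V_{\ge k+1}$ by Lemma \ref{interface_derivative}, and since $X_{\ge k+1}^\top X_{\ge k+1} = R_k^\top \Xt_{\ge k+1}^\top \Xt_{\ge k+1} R_k = R_k^\top R_k$, we get $A^{+} = (R_k^\top R_k)^{-1}(\Xt_{\ge k+1}R_k)^\top = R_k^{-1}\Xt_{\ge k+1}^\top$, hence $(A^{+})^\top = \Xt_{\ge k+1}R_k^{-\top}$. Substituting these into the displayed formula yields
\[
    (I - \Xt_{\ge k+1} \Xt_{\ge k+1}^\top)V_{\ge k+1} R_k^{-1} \Xt_{\ge k+1}^\top + \Xt_{\ge k+1} R_k^{-\top} V_{\ge k+1}^\top(I - \Xt_{\ge k+1} \Xt_{\ge k+1}^\top),
\]
which is the claimed identity.

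The main obstacle is the bookkeeping in the middle step: checking that the three cross terms generated by differentiating $A G^{-1}A^\top$ combine into the compact $(I-P)(\cdot) + (\cdot)(I-P)$ form. This rests on the symmetry of $G^{-1}$ and on the two identities $A A^{+} = P$ and $A^{+}A = I$; everything else (the pseudoinverse computation via $R_k$, the smoothness of $t\mapsto G(t)^{-1}$, and the substitution $\dot A = V_{\ge k+1}$) is routine once Lemma \ref{core_curve_lemma} and Lemma \ref{interface_derivative} are in hand.
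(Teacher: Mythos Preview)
Your proposal is correct and takes a genuinely different route from the paper. The paper differentiates the relation $X_{\ge k+1} = \Xt_{\ge k+1} R_k$ directly, then invokes the Stiefel tangent-space decomposition $\D_V(\Xt_{\ge k+1}) = \Xt_{\ge k+1}\Omega + \Xt_{\ge k+1}^{\perp}B$ with $\Omega$ skew-symmetric; solving for $B$ and using $\Omega + \Omega^\top = 0$ to kill the tangential part yields the result. You instead never touch $\D_V(\Xt_{\ge k+1})$: by rewriting $\Xt_{\ge k+1}\Xt_{\ge k+1}^\top = X_{\ge k+1}(X_{\ge k+1}^\top X_{\ge k+1})^{-1}X_{\ge k+1}^\top$ you reduce everything to the standard derivative-of-a-column-space-projector formula, fed only by $\D_V(X_{\ge k+1}) = V_{\ge k+1}$ from Lemma~\ref{interface_derivative}. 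Your route is arguably more elementary---it avoids the Stiefel geometry and the orthogonal complement $\Xt_{\ge k+1}^{\perp}$, trading them for a routine (if slightly tedious) product-rule expansion of $A G^{-1}A^\top$. The paper's route, by contrast, makes the cancellation mechanism more transparent (skew-symmetry of $\Omega$) and never needs to invert the Gram matrix $X_{\ge k+1}^\top X_{\ge k+1}$ explicitly. Both arguments lean on the same prerequisites (Lemma~\ref{core_curve_lemma} for minimality along the curve, Lemma~\ref{interface_derivative} for $\dot A$, and invertibility of $R_k$), so neither is strictly stronger.
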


\begin{proof}
  We will first introduce an index-free shorthand notation for the sake of readability; this is used for all subsequent proofs in this paper. For example, $R := R_k$, $X_> := X_{> k}$, $X_\ge := X_{\ge k}$, $X_> := X_{\ge k+1}$, and the same notation for the left interface matrices. All identity matrices are abbreviated to $I$ with size implied from context, e.g. $(I \otimes X_<) = (I_{n_k} \otimes X_{\le k-1})$. 
 
  We start the proof by differentiating both sides of the defining equality for $R_k$, namely $X_> = \Xt_> R$
  \begin{align*}
    \D_V(X_>) &= \D_V(\Xt_> R) \\
    V_> &= \D_V(\Xt) R + \Xt_> \D_V(R) \\
    V_> &= (\Xt_> \Omega + \Xt_>^\perp B) R + \Xt_> \D_V(R)
  \end{align*}
  where $\Omega \in \reals^{r_k \times r_k}$ is skew-symmetric, and $B \in \reals^{n_{k+1} \cdots n_d - r_k \times r_k}$. This decomposition comes from the fact that $\widetilde{W}_>(X_t^V)$ is a smooth curve on the Stiefel manifold, so we can use the structure of its tangent space to generate $\Omega, B$. We want to find more explicit structure for the two unknowns we artificially introduced: $\Omega$ and $B$. Starting with $B$, we multiply both sides by $(\Xt_>^\perp)^\top$, where $\Xt_>^\perp$ is the orthonormal complement of $\Xt_>$, on the left to get the following expression for $B$:

  \begin{align*}
    (\Xt_>^\perp)^\top V_> &= BR \\
    B &= (\Xt_>^\perp)^\top V_> R^{-1}
  \end{align*}

  We then proceed to evaluate the desired equality:

  \begin{align*}
      &= (\Xt_> \Omega + \Xt_>^\perp B) \Xt_>^\top + \Xt_> (\Xt_> \Omega + \Xt_>^\perp B)^\top \\
      &= (\Xt_> \Omega + \Xt_>^\perp (\Xt_>^\perp)^\top V_> R^{-1})\Xt_>^\top + \Xt_>(\Xt_> \Omega + \Xt_>^\perp (\Xt_>^\perp)^\top V_> R^{-1})^\top \\
      &= (\Xt_> \Omega + \Xt_>^\perp (\Xt_>^\perp)^\top V_> R^{-1})\Xt_>^\top + \Xt_>(\Omega^\top \Xt_>^\top + R^{-T} V_>^\top \Xt_>^\perp (\Xt_>^\perp)^\top ) \\
      &= \Xt_>^\perp (\Xt_>^\perp)^\top V_> R^{-1} \Xt_>^\top + \Xt_> R^{-T} V_>^\top\Xt_>^\perp (\Xt_>^\perp)^\top + \Xt_> \Omega \Xt_>^\top + \Xt_> \Omega^\top \Xt_>^\top \\
      &= (I - \Xt_> \Xt_>^\top)V_> R^{-1} \Xt_>^\top + \Xt_> R^{-\top} V_>^\top(I - \Xt_> \Xt_>^\top) + \Xt_> (\Omega + \Omega^\top) \Xt_>^\top \\
      &= (I - \Xt_> \Xt_>^\top)V_> R^{-1} \Xt_>^\top + \Xt_> R^{-\top} V_>^\top(I - \Xt_> \Xt_>^\top)
  \end{align*}
\end{proof}

Using Lemmas \ref{eq_left_variational} and \ref{eq_right_variational}, we can then evaluate the differential $\D_V \P_X^k Z$ through a simple application of the product rule:

\begin{lemma}\label{explicit_differential}
    The differential for $\P_i Z$ is given by:
    \begin{align*}
        (\De_V \P_X^k Z)^{<k>} &= \left(I_{n_k} \otimes (V_{< k} X_{< k}^\top + X_{< k} V_{< k}^\top) - (V_{\le k} X_{\le k}^\top + X_{\le k} V_{\le k}^\top)\right)Z^{<k>} \Xt_{> k} \Xt_{> k}^\top \\
        &+ (I_{n_k} \otimes X_{< k}X_{< k}^\top - X_{\le k} X_{\le k}^\top)Z^{<k>}(I_{n_{k+1}\cdots n_d} - \Xt_{> k} \Xt_{> k}^\top) V_{> k} R_i^{-1} \Xt_{> k}^\top \\
        &+(I_{n_k} \otimes X_{< k}X_{< k}^\top - X_{\le k} X_{\le k}^\top)Z^{<k>}\Xt_{> k} R_k^{-T} V_{> k}^\top (I_{n_{k+1}\cdots n_d} - \Xt_{> k} \Xt_{> k}^\top),
    \end{align*}

    for $i < d$, and 

    \begin{equation*}
        (\De_V \P_X^d Z)^{<d>} = (I \otimes (V_{< d} X_{< d}^\top + X_{< d} V_{< d}^\top))Z^{<d>}.
    \end{equation*}
\end{lemma}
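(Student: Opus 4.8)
The plan is to differentiate the explicit formulas \eqref{proj_explicit} and \eqref{proj_d} for $\P_X^k$ along the smooth, left-orthogonal curve of cores $c_{TT}(t)$ constructed in Lemma~\ref{core_curve_lemma}; by Lemma~\ref{standard_diff}, this curve-wise derivative is exactly $\De_V \P_X^k Z$, so it suffices to compute it. The first step is to put \eqref{proj_explicit} in a form where the product rule is transparent. Distributing the Kronecker factors and using $X_{\le k} = (I_{n_k} \otimes X_{< k}) U_k^L$ --- an algebraic identity (no orthogonality needed), hence valid at every point of $c_{TT}(t)$ --- gives
\begin{align*}
(\P_X^k(Z))^{<k>} = \bigl( I_{n_k} \otimes (X_{< k} X_{< k}^\top) - X_{\le k} X_{\le k}^\top \bigr) Z^{<k>} \Xt_{> k} \Xt_{> k}^\top
\end{align*}
for $k < d$, and $(\P_X^d(Z))^{<d>} = \bigl( I_{n_d} \otimes (X_{< d} X_{< d}^\top) \bigr) Z^{<d>}$ for $k = d$.

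Next, I would apply the product rule to these expressions along $c_{TT}(t)$, treating the flattening $Z^{<k>}$ as constant. For $k < d$ this splits into the term where $\D_V$ hits the left factor $I_{n_k} \otimes (X_{< k} X_{< k}^\top) - X_{\le k} X_{\le k}^\top$ and the term where it hits $\Xt_{> k} \Xt_{> k}^\top$. For the left factor, $\D_V$ commutes with $I_{n_k} \otimes (\cdot)$, and Lemma~\ref{eq_left_variational} applied at indices $k-1$ and $k$ yields $\D_V(X_{< k} X_{< k}^\top) = V_{< k} X_{< k}^\top + X_{< k} V_{< k}^\top$ and $\D_V(X_{\le k} X_{\le k}^\top) = V_{\le k} X_{\le k}^\top + X_{\le k} V_{\le k}^\top$, producing the first line of the claimed formula. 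For the right factor, Lemma~\ref{eq_right_variational} gives $\D_V(\Xt_{> k} \Xt_{> k}^\top)$ as the sum of $(I - \Xt_{> k} \Xt_{> k}^\top) V_{> k} R_k^{-1} \Xt_{> k}^\top$ and $\Xt_{> k} R_k^{-\top} V_{> k}^\top (I - \Xt_{> k} \Xt_{> k}^\top)$; substituting this, with the unchanged left factor $I_{n_k} \otimes (X_{< k} X_{< k}^\top) - X_{\le k} X_{\le k}^\top$ in front, gives exactly the second and third lines. The $k = d$ case is immediate: the product rule on $( I_{n_d} \otimes (X_{< d} X_{< d}^\top)) Z^{<d>}$ together with Lemma~\ref{eq_left_variational} at index $d-1$ yields the stated formula.

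I do not expect a genuine obstacle here: all the real work --- existence of a smooth, left-orthogonality-preserving core curve with prescribed velocity, the reduction of $\De_V \P_X^k$ to a curve-wise derivative, and the two interface-derivative identities --- has already been done in Lemmas~\ref{core_curve_lemma}, \ref{standard_diff}, \ref{eq_left_variational}, and \ref{eq_right_variational}. What is left is a mechanical application of the product rule, plus careful bookkeeping of the sizes of the identity blocks inside the Kronecker products. The only point deserving a moment of care is that the product rule is legitimate for the factor $\Xt_{> k} \Xt_{> k}^\top$, that is, that $t \mapsto \Xt_{> k}(c_{TT}(t))$ is smooth near $t = 0$; this is precisely what underpins Lemma~\ref{eq_right_variational} and may be cited from there.
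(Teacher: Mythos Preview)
Your proposal is correct and matches the paper's own argument: the paper states the lemma as following ``through a simple application of the product rule'' from Lemmas~\ref{eq_left_variational} and \ref{eq_right_variational}, which is exactly what you carry out, including the preliminary rewriting of \eqref{proj_explicit} as $(I_{n_k}\otimes X_{<k}X_{<k}^\top - X_{\le k}X_{\le k}^\top)Z^{<k>}\Xt_{>k}\Xt_{>k}^\top$ via the identity $X_{\le k} = (I_{n_k}\otimes X_{<k})U_k^L$.
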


\noindent We can then directly apply Lemma \ref{explicit_differential} to prove Theorem \ref{TT_diagonal_terms} of the main paper.
\vspace{4mm}
\begin{proof}[Proof of Theorem \ref{TT_diagonal_terms}]
    Following the convention for the shorthand introduced above. For the $k < d$ case:
    \begin{align*}
        &\quad (\P_X^k \D_V \P_X^k Z)^{<k>} \\
        &=  (\P_X^k \left((I\otimes (V_{< } X_{< }^\top + X_{< } V_{< }^\top) - (V_{\le } X_{\le }^\top + X_{\le } V_{\le }^\top))Z^{<k>} \Xt_{> } \Xt_{> }^\top\right))^{<k>} \stepcounter{equation}\tag{\theequation}\label{summand1}\\
        &\quad + (\P_X^k\left((I \otimes X_{< }X_{< }^\top - X_{\le } X_{\le }^\top)Z^{<k>}(I - \Xt_{> } \Xt_{> }^\top) V_{> } R_i^{-1} \Xt_{> }^\top\right))^{<k>} \stepcounter{equation}\tag{\theequation}\label{summand2} \\
        &\quad + (\P_X^k((I \otimes X_{< }X_{< }^\top - X_{\le } X_{\le }^\top) Z^{<k>}\Xt_{> } R^{-T} V_{> }^\top (I - \Xt_{> } \Xt_{> }^\top))^{<k>}.\stepcounter{equation}\tag{\theequation}\label{summand3}
    \end{align*}
    Recall the formula for $(\P_X^k Z)^{<k>}$ given in eq. \eqref{proj_explicit}. The operation consists of left multiplication by $(I_{n_k} \otimes (X_{<} X_{<}^\top) - X_{\le } X_{\le }^\top$ and right multiplication by $\Xt_{> } \Xt_{> }^\top$. Since $(I_ - \Xt_{> k} \Xt_{> k}^\top)\Xt_{> k} \Xt_{> k}^\top = \Xt_{> } \Xt_{> }^\top - \Xt_{> } \Xt_{> }^\top = 0$, the 3rd summand \eqref{summand3} vanishes. Since $(I \otimes X_{< }X_{< }^\top - X_{\le } X_{\le }^\top)^2 = (I_ \otimes X_{< }X_{< }^\top - X_{\le } X_{\le }^\top)$, \eqref{summand2} remains unchanged from $\P_X^k$. We can simplify \eqref{summand1} in the following way:
    \begin{align*}
        &(\P_X^k \left((I \otimes (V_{< } X_{< }^\top + X_{< } V_{< }^\top) - (V_{\le } X_{\le }^\top + X_{\le } V_{\le }^\top))Z^{<k>} \Xt_{> } \Xt_{> }^\top\right))^{<k>}\\
        &= (I \otimes X_{< }X_{< }^\top - X_{\le } X_{\le }^\top)(I \otimes (V_{< } X_{< }^\top + X_{< } V_{< }^\top) - (V_{\le } X_{\le }^\top + X_{\le } V_{\le }^\top)) Z^{<k>} \Xt_{> } \Xt_{> }^\top \\
        &= \left(I \otimes X_< V_<^\top - (I \otimes X_<)U_k^L V_\le ^\top - (I \otimes X_<)\delta V_k^L X_\le^\top - X_\le (U_k^L)^\top(I \otimes V_<^\top) + X_\le V_\le^\top  \right) \\
        &\quad \cdot Z^{<k>} \Xt_{> } \Xt_{> }^\top \\
        &= \left(I \otimes X_< V_<^\top - X_\le V_\le ^\top - (I \otimes X_<)\delta V_k^L X_\le^\top - X_\le (U_k^L)^\top(I \otimes V_<^\top) + X_\le V_\le^\top  \right) Z^{<k>} \Xt_{> } \Xt_{> }^\top \\
        &= \left(I \otimes X_< V_<^\top- (I \otimes X_<)\delta V_k^L (U_k^L)^\top (I \otimes X_<^\top) - (I \otimes X_<) U_k^L (U_k^L)^\top(I \otimes V_<^\top) \right) Z^{<k>} \Xt_{> } \Xt_{> }^\top .
    \end{align*}
    Factoring out $(I \otimes X_<)$ from \eqref{summand2} and adding it to the result above gets the desired result:
    \begin{align*}
        &= (I \otimes X_{< }) \left(((I - U_k^L(U_k^L)^\top)(I \otimes V_{< }^\top) )Z^{<k>} \Xt_{> } \right) \Xt_{> }^\top \\
        &\quad - (I \otimes X_{< }) \left( \delta V_k^L (U_k^L)^\top (I \otimes X_{< }))Z^{<k>} \Xt_{> } \right) \Xt_{> }^\top \\
        &\quad + (I \otimes X_{< })\left( (I - U_k^L(U_k^L)^\top)(I \otimes X_{<}^\top)Z^{<k>}(I - \Xt_{> } \Xt_{> }^\top) V_> R^{-1}\right)\Xt_{>}^\top.
    \end{align*}
    The $k = d$ case follows much more straightforwardly:
    \begin{align*}
        (\P_X^d \D_V \P_X^d Z)^{<d>} &= \P_X^d (I \otimes (V_{< d} X_{< d}^\top + X_{< d} V_{< d}^\top))Z^{<d>} \\
        &= (I \otimes (X_{< d} X_{< d}^\top))(I \otimes (V_{< d} X_{< d}^\top + X_{< d} V_{< d}^\top))Z^{<d>}\\
        &= (I \otimes (X_{< d} V_{< d}^\top))Z^{<d>}\\
        &= (I \otimes X_{< d})(I \otimes V_{< d}^\top)Z^{<d>}
    \end{align*}
\end{proof}

Recall that in the main paper we introduced the formula $\P_X^i \D_V \P_X^j Z = -(\De_V \P_X^i) \P_X^j Z$ as a key component for deriving the cross-term formulae. We provide here a proof of this statement: 
\begin{lemma}\label{cross_term_exchange}
    For all $i, j \in [d], i \ne j$, the following identity holds: $\P_X^i \D_V \P_X^j Z = -(\De_V \P_X^i) \P_X^j Z$
\end{lemma}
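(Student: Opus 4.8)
The plan is to exploit the two facts about orthogonal projectors that are standard for submanifolds: that $\calP_X^i$ is idempotent onto its image and that the images of $\calP_X^i$ and $\calP_X^j$ are orthogonal for $i\ne j$. Concretely, since the tangent space decomposes as an orthogonal direct sum $\T_X\Mr = \bigoplus_{k=1}^d \operatorname{im}\calP_X^k$, we have the pointwise identities $\calP_X^i \calP_X^j = 0$ for $i\ne j$, valid at every point of $\Mr$. The idea is to differentiate this identity along the curve $c_{TT}(t)$ constructed in Lemma \ref{core_curve_lemma}, which satisfies $c_{TT}(0)=X$, $c_{TT}'(0)=V$, and for which all cores stay minimal and left-orthogonal in a neighborhood of $t=0$ (so that the splitting $\calP^k$ is defined along the whole curve).

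First I would fix an arbitrary tensor $Z \in \reals^{n_1\times\cdots\times n_d}$ and write $g(t) := \calP_{c_{TT}(t)}^i\big(\calP_{c_{TT}(t)}^j(Z)\big)$. By the orthogonality of the subspaces, $g(t) = 0$ identically for $t$ in a small neighborhood of $0$, hence $g'(0) = 0$. Next I would expand $g'(0)$ using the product rule for the bilinear map $(A,B)\mapsto A[B]$ (composition of the operator $\calP^i$ with the tensor it acts on), exactly as was done for the interface matrices in Lemma \ref{interface_derivative} and for $\D_V\P_X^k Z$ in Lemma \ref{explicit_differential}. This gives
\begin{align*}
0 = g'(0) = (\D_V\calP_X^i)\big(\calP_X^j(Z)\big) + \calP_X^i\big((\D_V\calP_X^j)(Z)\big).
\end{align*}
Rearranging yields $\calP_X^i (\D_V\calP_X^j) Z = -(\D_V\calP_X^i)\,\calP_X^j Z$, which is the claim.

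The only subtlety — and the main thing to get right — is justifying that the splitting $\calP_{c_{TT}(t)} = \sum_k \calP_{c_{TT}(t)}^k$ is well defined and smooth in $t$, so that the pointwise identity $\calP^i_{c_{TT}(t)}\calP^j_{c_{TT}(t)} = 0$ holds for all small $t$ and can legitimately be differentiated. This is exactly what Lemma \ref{core_curve_lemma} (together with Lemma \ref{standard_diff}) secures: $c_1(t),\ldots,c_d(t)$ remains a minimal, left-orthogonal decomposition for $t\in(-\epsilon,\epsilon)$, so the formulas \eqref{proj_explicit}–\eqref{proj_d} define $\calP^k_{c_{TT}(t)}$ smoothly in $t$, and the orthogonal-direct-sum structure of the tangent space (cited from \citep{steinlechner2016riemannian}) guarantees $\calP^i_{c_{TT}(t)}\calP^j_{c_{TT}(t)} = 0$ at each such $t$. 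With that in hand, the differentiation is a routine product-rule computation of the same flavor as Lemmas \ref{interface_derivative} and \ref{explicit_differential}, and no new estimates are needed.
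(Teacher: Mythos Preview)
Your proposal is correct and follows essentially the same approach as the paper: differentiate the pointwise identity $\calP_X^i \calP_X^j Z = 0$ along the curve $c_{TT}(t)$ and apply the product rule to obtain $(\D_V\calP_X^i)\calP_X^j Z + \calP_X^i(\D_V\calP_X^j)Z = 0$, then rearrange. If anything, you are slightly more explicit than the paper about why the splitting $\calP^k_{c_{TT}(t)}$ remains well defined and smooth for small $t$, which is a good point to make.
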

\vspace{4mm}
\begin{proof}
  First, note that $\P_X^i \P_X^j Z = 0$ for all $X \in \Mr$ by orthogonality. From this, we directly get that $\D_V(\P_X^i \P_X^j Z) = 0$. From Lemma \ref{explicit_differential}, we know that differentials of the form $\D_V \P_X^k Z$ exist for all fixed tensors $Z$, allowing us to split $\D_V(\P_X^i \P_X^j Z)$ by the product rule:
  \begin{align*}
    \D_V(\P_X^i \P_X^j Z) &= 0 \\
    \lim_{t \to 0} \frac{\P_{c(t)}^i \P_{c(t)}^j Z - \P_{c(0)}^i \P_{c(0)}^j Z}{t} &= 0 \\
    \lim_{t \to 0} \frac{\P_{c(t)}^i \P_{c(t)}^j Z - \P_{c(0)}^i \P_{c(t)}^j Z + \P_{c(0)}^i \P_{c(t)}^j Z - \P_{c(0)}^i \P_{c(0)}^j Z}{t} &= 0 \\
    \lim_{t \to 0} \frac{(\P_{c(t)}^i - \P_{c(0)}^i )\P_{c(t)}^j Z}{t} + \lim_{t \to 0} \frac{\P_{c(0)}^i (\P_{c(t)}^j Z - \P_{c(0)}^j) Z}{t} &= 0 \\
    (\D_V \P_X^i) \P_X^j Z + \P_X^i \D_V \P_X^j Z &= 0 \\
    \P_X^i \D_V \P_X^j Z &= -(\D_V \P_X^i) \P_X^j Z
  \end{align*}
\end{proof}

Finally, we will prove Theorem \ref{TT_cross_terms} of the main paper:

\begin{proof}[Proof for Theorem \ref{TT_cross_terms}]
  Denote $Y_\le^j$ and $Y_>^j$ to be interface matrices for a TT-decomposition $U_1, \ldots U_{j-1}, \delta Y_j, \widetilde{U}_{j+1}, \ldots, \widetilde{U}_d$, where $\delta Y_j$ are the variational cores for the tangent vector $\P_X^j(Z)$. Starting with the $j > i$ case, we can use Theorem 3.6 to write the following:

  \begin{align*}
    (\P_i \D_V \P_j Z)^{<i>} &= (-(\D_V \P_i) \P_j Z)^{<i>}  \\
    &= -\left(I \otimes (V_{< i} X_{< i}^\top + X_{< i} V_{< i}^\top) - (V_{\le i} X_{\le i}^\top + X_{\le i} V_{\le i}^\top)\right)X_{\le i} (Y^j_{> i})^\top \Xt_{> i} \Xt_{> i}^\top \\
      &\quad \quad - (I \otimes X_<X_<^\top - X_\le X_\le^\top)X_\le (Y^j_>)^\top(I - \Xt_> \Xt_>^\top) V_> R^{-1} \Xt_>^\top \\
      &\quad \quad -(I \otimes X_<X_<^\top - X_\le X_\le^\top)X_\le (Y^j_>)^\top\Xt_> R^{-T} V_>^\top (I - \Xt_> \Xt_>^\top)
  \end{align*}

  Along with the identity $V_{\le k}^\top X_{\le k} = 0$, it holds that $(I \otimes X_<X_<^\top - X_\le X_\le^\top)X_\le = 0$:
  \begin{align*}
    (I \otimes X_<X_<^\top - X_\le X_\le^\top)X_\le &= (I \otimes X_<X_<^\top)X_\le - X_\le X_\le^\top X_\le \\
    &= (I \otimes X_<X_<^\top)(I \otimes X_<)U^L - X_\le \\
    &= (I \otimes X_<X_<^\top X_<)U^L - X_\le \\
    &= (I \otimes X_<)U^L - X_\le \\
    &= X_\le - X_\le = 0
  \end{align*}
  Thus, we are left only with the first summand:

  \begin{align*}
      (-(\D_V \P_i) \P_j Z)^{<i>} &= -(I \otimes (V_< X_<^\top) - V_\le X_\le^\top)X_\le (Y^j_>)^\top \Xt_> \Xt_>^\top \\
      &= \left(V_\le - (I \otimes V_<) U^L \right)(Y^j_>)^\top \Xt_> \Xt_>^\top \\
      &= (I \otimes X_<) \delta U_i^L (Y_>^j)^\top \Xt_> \Xt_>^\top
  \end{align*}

  Giving us the desired form for the $j > i$ case. For the $j < i$ case, we have the following similar expression:

  \begin{align*}
      (-(\D_V \P_i) \P_j Z)^{<i>} &= -\left(I \otimes (V_< X_<^\top + X_< V_<^\top) - (V_\le X_\le^\top + X_\le V_\le^\top)\right)Y^j_\le \Xt_>^\top \Xt_> \Xt_>^\top \\
      &- (I \otimes X_<X_<^\top - X_\le X_\le^\top)Y^j_\le \Xt_>^\top(I - \Xt_> \Xt_>^\top) V_> R^{-1} \Xt_>^\top \\
      &-(I \otimes X_<X_<^\top - X_\le X_\le^\top)Y^j_\le \Xt_>^\top\Xt_> R^{-T} V_>^\top (I - \Xt_> \Xt_>^\top) \\
  \end{align*}

  Note that both $X_{\le k}^\top Y^j_{\le k} = 0$ and $X_{< k}^\top Y^j_{< k} = 0$; this is because $Y^j_{\le k}$ and $Y^j_{< k}$ are simply variational interface matrices for a tangent vector with cores $\{0, \ldots, 0, \delta Y_j, 0, \ldots, 0\}$, allowing us to use Lemma \ref{variational_cancellation}. It follows that both $X_{\le }^\top Y^j_{\le } = 0$ and $(I \otimes X_<^\top) Y^j_\le = (I \otimes X_<^\top Y^j_<)U^L = 0$, so the second and third summands are indeed 0. We are then left with the first summand:

  \begin{align*}
      (-(\D_V \P_i) \P_j Z)^{<i>} &= -\left(I \otimes (X_< V_<^\top) - (X_\le V_\le^\top)\right)Y^j_\le \Xt_>^\top \Xt_> \Xt_>^\top \\
      &= -(I \otimes X_< V_<^\top - X_\le V_\le^\top)Y^j_\le \Xt_>^\top
  \end{align*}

  This finishes the $j < i$ case. The 3rd case, $(\P_X^d \D_V \P_X^j Z)^{<d>}$, is trivial.

\end{proof}

\section{Computational complexity analysis}\label{proofs_complexity}

In this section, we will outline an algorithm to compute the correction term on $\Mr$ given formulas from Theorems \ref{TT_diagonal_terms} and \ref{TT_cross_terms}. In general, we want to avoid the ``curse of dimensionality'' by avoiding any computation on the order of $O(n^d)$. For simplicity, we restrict the rank and size vectors in this section to be uniform, i.e. $\mathbf{n} = (n, \ldots, n)$ and $\mathbf{r} = (r, \ldots r)$.

To begin this section, we establish a number of computations that we know can be done in feasible time:

\begin{corollary}\label{computation_init}
    The following can be computed in $O(dnr^3)$ flops:

    \begin{enumerate}
        \item \label{comp_init_1} Given a TT-tensor $X$ with left-orthogonal decomposition $\{U_1, \ldots, U_d\}$, the right-orthogonal decomposition $\{\widetilde{U}_1, \ldots, \widetilde{U}_d\}$ along with matrices $\{R_1,\ldots , R_{d-1}\}$ as defined in Section \ref{orthogonal_decomp} are computable in $O(dnr^3)$ flops.
        \item \label{comp_init_2} Given two TT-tensors $X$ and $Y$ of size $\mathbf{n}$, along with their TT-decompositions of size $\mathbf{r}$, the set of matrices $\{X_{\le 1}^\top Y_{\le 1}, \ldots, X_{\le d}^\top Y_{\le d}\}$ is computable in $O(dnr^3)$ flops. Similarly, the set of matrices $\{X_{\ge 1}^\top Y_{\ge 1}, \ldots, X_{\ge d}^\top Y_{\ge d}\}$ is also computable in $O(dnr^3)$ flops.
    \end{enumerate}
\end{corollary}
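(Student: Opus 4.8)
The plan is to prove both items by straightforward flop counting, organized around one principle: the large interface matrices ($X_{\le k}\in\reals^{n_1\cdots n_k\times r_k}$ and their right-interface counterparts) are never formed; every elementary operation touches only a core ($U_k\in\reals^{r_{k-1}\times n_k\times r_k}$, whose flattenings have size at most $n_kr_{k-1}\times r_k$) together with a small $O(r)\times O(r)$ matrix carried from the previous step. Under this section's uniform assumption $\mathbf n=(n,\dots,n)$, $\mathbf r=(r,\dots,r)$, a product of two such objects costs $O(nr^3)$, and only $O(d)$ such products are needed, for a total of $O(dnr^3)$.

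For item \ref{comp_init_1}, this is essentially the cost of Algorithm 4.1 of \citep{steinlechner2016riemannian} (already recalled in Section \ref{orthogonal_decomp}); I would just make the count explicit. Right-orthogonalization is a single right-to-left sweep over $k$: at step $k$ one performs an LQ-type factorization of the right flattening $U_k^R\in\reals^{r_{k-1}\times n_kr_k}$, whose orthonormal factor becomes the right-orthogonal core $\widetilde U_k$ and whose triangular factor is the small matrix $R_{k-1}\in\reals^{r_{k-1}\times r_{k-1}}$ (the factor relating $X_{\ge k}$ and $\Xt_{\ge k}$, cf.\ Section \ref{orthogonal_decomp}); $R_{k-1}$ is then absorbed into core $k-1$ by one matrix product. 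An LQ factorization of an $r\times nr$ matrix costs $O(nr^3)$ (note it is $r\times nr$, not $nr\times nr$), and absorbing an $r\times r$ factor into a flattening of size $O(nr)\times r$ is again $O(nr^3)$; summing over the $O(d)$ steps gives $O(dnr^3)$, with $R_1,\dots,R_{d-1}$ produced for free as the triangular factors.

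For item \ref{comp_init_2}, let $U_1,\dots,U_d$ and $W_1,\dots,W_d$ be the given cores of $X$ and $Y$ and set $M_k:=X_{\le k}^\top Y_{\le k}\in\reals^{r_k\times r_k}$. The recursive definition of the interface matrices (Def.\ \ref{def:interface}) gives $X_{\le k}=(I_{n_k}\otimes X_{\le k-1})U_k^L$ and $Y_{\le k}=(I_{n_k}\otimes Y_{\le k-1})W_k^L$, hence
\[ M_k = (U_k^L)^\top (I_{n_k}\otimes X_{\le k-1}^\top)(I_{n_k}\otimes Y_{\le k-1}) W_k^L = (U_k^L)^\top (I_{n_k}\otimes M_{k-1}) W_k^L, \qquad M_0 = 1, \]
where the second equality uses the Kronecker mixed-product identity $(I\otimes X^\top)(I\otimes Y)=I\otimes(X^\top Y)$, which is exactly what keeps the recursion inside the small matrices. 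Given $M_{k-1}$, forming $(I_{n_k}\otimes M_{k-1})W_k^L$ costs $n_k$ block products of $r\times r$ matrices, i.e.\ $O(nr^3)$, and the left multiplication by $(U_k^L)^\top\in\reals^{r\times nr}$ is $O(nr^3)$; one left-to-right sweep thus returns the whole set $\{M_1,\dots,M_d\}$ in $O(dnr^3)$ flops. The right-interface set $\{X_{\ge k}^\top Y_{\ge k}\}_k$ (of $r_{k-1}\times r_{k-1}$ matrices) is computed identically by the mirror recursion obtained from the right-interface recursion of Def.\ \ref{def:interface}, sweeping $k=d$ down to $k=1$, at the same cost.

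There is no genuine obstacle here — it is bookkeeping — but the one step that carries the argument, and the reason the bound is $O(dnr^3)$ rather than exponential in $d$, is recognizing the Kronecker collapse $(I\otimes X^\top)(I\otimes Y)=I\otimes(X^\top Y)$ (and its right-sided analogue), so that no intermediate object ever has a dimension of order $n^k$: the Gram-type matrices stay $O(r)\times O(r)$ and the core flattenings stay $O(nr)\times O(r)$. A secondary point to be careful with is charging each small orthogonalization factorization as $O(nr^3)$, using that it acts on a rectangular $r\times nr$ matrix rather than a square one.
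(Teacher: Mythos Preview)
Your proposal is correct and follows essentially the same approach as the paper: the paper simply cites Algorithms~4.1 and~4.2 of \citep{steinlechner2016riemannian} (right-orthogonalization and the inner-product sweep, respectively) and notes that the intermediate quantities of the latter are precisely the $X_{\le k}^\top Y_{\le k}$. Your argument is a self-contained unpacking of exactly those algorithms, with the same core-by-core sweep and the same Kronecker collapse, so there is nothing to add beyond observing that your write-up is more explicit than the paper's citation-only justification.
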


This is a corollary from \cite[Alg. 4.1]{steinlechner2016riemannian} and \cite[Alg. 4.2]{steinlechner2016riemannian} respectively. Note that \cite[Alg. 4.2]{steinlechner2016riemannian} is presented as an algorithm to compute the inner product; however, the algorithm achievies this by computing $X_{\le d}^\top Y_{\le d} = \inner{X}{Y}$, and the intermediate steps for computing $X_{\le d}^\top Y_{\le d}$ are indeed $\{X_{\le 1}^\top Y_{\le 1}, \ldots , X_{\le d-1}^\top Y_{\le d-1} \}$. There is analogous procedure stated in \cite[\S 4.2.3]{steinlechner2016riemannian} that computes $X_{\ge 1}^\top Y_{\ge 1}$.

\subsection{Computational complexity for the diagonal terms}

We will first show the general $O(dz + dnr^3)$ estimate, then show the $O(z + dnr^3)$ estimate for tensor completion and argue why we would see this estimate for typical applications. We assume that we are given a left-orthogonal decomposition $\{U_1, \ldots, U_d\}$ for TT-tensor $X$, as well as tangent cores $\{\delta\widetilde{V}_1, \ldots, \delta\widetilde{V}_d\}$ for tangent vector $V$.

For all cases, the limiting computations will be to compute the following: 
\begin{align}
    &\{(I \otimes X_{< k})Z^{<k>}\Xt_{> k}\}_{k=1}^d,\label{comp_essential_1}\\
    &\{(I \otimes V_{< k})Z^{<k>}X_{> k}\}_{k=1}^d\text{, and}\label{comp_essential_2} \\
    &\{(I \otimes X_{< k})Z^{<k>}V_{> k}\}_{k=1}^d.\label{comp_essential_3}
\end{align}
After computing these matrices, we can build up the computation for the core of the diagonal term $\P_X^k \D_V \P_X^k Z$ through small matrix multiplications ($U_k^L, \delta V_k^L \in \reals^{rn \times r}$, $R_k \in \reals^{r \times r}$). Obtaining all of the $R_k$ matrices can be done in $O(dnr^3)$ flops by Corollary \ref{computation_init}.\ref{comp_init_2}, and since $R_k$ is upper triangular, each $R_k^{-1}$ is computable in under $O(nr^3)$ flops. Note we need to extract the $\delta V_k$ parametrization from the $\delta \widetilde{V}_k$ parametrization; this can be done under $O(dnr^3)$ flops total by Lemma \ref{interchange_tangent_param}. Finally, we are left with the computation of $\Xt_{> k}^\top V_{> k}$; since each matrix is in $\reals^{n_{k+1} \cdots n_d \times r_k}$, computing the products explicity would yield a curse of dimensionality. Note that this product is similar to the one given in Corollary \ref{computation_init}.\ref{comp_init_2}, which is an order $O(dnr^3)$ computation. Indeed, we are able to make a similar algorithm to compute all such $\Xt_{> k}^\top V_{> k}$ products in a total of $O(dnr^3)$ flops. After finishing small matrix multiplications, this completes the computation of the variational cores for all diagonal terms $\P_X^k \D_V \P_X^k Z$ in an extra $O(dnr^3)$ flops, on top of the computation of \eqref{comp_essential_1}\eqref{comp_essential_2}\eqref{comp_essential_3}. This then leaves us to estimate the computation for \eqref{comp_essential_1}\eqref{comp_essential_2}\eqref{comp_essential_3}.

\begin{algorithm}\label{alg1}
    \SetAlgoLined
    \KwResult{The set of matrices $\{\Xt_{> 1}^\top V_{> 1}, \ldots, \Xt_{> d-1}^\top V_{> d-1} \}$ }
    \textbf{Input: } TT-cores $\{U_1, \ldots U_d\}$, $\{\widetilde{U}_1, \ldots \widetilde{U}_d\}$, and variational cores $\{\delta V_1, \ldots, \delta V_d\}$ \;

    $p_1 \leftarrow \widetilde{U}_d^R(\delta V_d^R)^\top$\;

    $p_2 \leftarrow \widetilde{U}_d^R(U_d^R)^\top$\;

    $\Xt_{> d-1}^\top V_{> d-1} \leftarrow p_1$\;

     \For{$k = d-1,\ldots, 2$}{
      $A \leftarrow \widetilde{U}_k \times_3 p_1^\top$ \textit{($A$ is a tensor of size $(r_{k-1}, n_k, r_k)$)}\;

      $B \leftarrow \widetilde{U}_k \times_3 p_2^\top$ \;

      $p_1 \leftarrow A^R(U_k^R)^\top + B^R (\delta V_k^R)^\top$ \textit{(note the resemblance of Definition \ref{variational_interface})}\;

      $\Xt_{> k-1}^\top V_{> k-1} \leftarrow p_1$\;

      $p_2 \leftarrow B^R (U_k^R)^\top$\;

     }
     \caption{Computing $\{\Xt_{> k}^\top V_{> k}\}_{k=1}^d$ (adapted from \citep[Alg. 4.2]{steinlechner2016riemannian})}
\end{algorithm}

In typical algorithms, there is some structure on the Euclidean gradient $Z$ that allows for more efficient computation of \eqref{comp_essential_1}\eqref{comp_essential_2}\eqref{comp_essential_3}. For example, in tensor completion, the gradient $Z$ is sparse, and we can compute \eqref{comp_essential_1} in $O(d|\Omega|r^2)$ flops using \citep[Alg. 5.2]{steinlechner2016riemannian}, where $\Omega$ is the set of observed indices.

As can be seen from the tensor completion example, tight estimates for the computational complexity of \eqref{comp_essential_1}\eqref{comp_essential_2}\eqref{comp_essential_3} will be dependent on the gradient's structure. Nonetheless, we argue that in typical algorithms, computing \eqref{comp_essential_1}\eqref{comp_essential_2}\eqref{comp_essential_3} will typically take about as much time as computing \eqref{comp_essential_1}, and computing the diagonal term sum $\sum_{k=1}^d\P_X^k \D_V \P_X^k (Z)$ will take about as much time as computing $\P_X(Z)$.

To illustrate this claim, we present pseudocode for an algorithm for an algorithm for efficient computing \eqref{comp_essential_1}\eqref{comp_essential_2}\eqref{comp_essential_3} when the gradient $Z$ is sparse. This algorithm adapts heavily from \citep[Alg. 5.2]{steinlechner2016riemannian}, the efficient algorithm for computing \eqref{comp_essential_1} for sparse gradients. This algorithm highlights the general approach to transform algorithms that compute \eqref{comp_essential_1} into those that compute \eqref{comp_essential_1}\eqref{comp_essential_2}\eqref{comp_essential_3}: alongside the original computations of matrices $(I \otimes X_{< k})Z^{<k>}\Xt_{> k}$ that utilize Def. \ref{def:interface}, we can build up $(I \otimes V_{< k})Z^{<k>}\Xt_{> k}$ and $(I \otimes X_{< k})Z^{<k>}V_{> k}$ in the same manner that instead utilize Def. \ref{variational_interface}.

\begin{algorithm}
    \label{alg2}
    \SetAlgoLined
    \KwResult{Three set of $d$ matrices: $\{(I \otimes X_{< k})Z^{<k>}\Xt_{> k}\}_{k=1}^d$, $\{(I \otimes V_{< k})Z^{<k>}X_{> k}\}_{k=1}^d$, and $\{(I \otimes X_{< k})Z^{<k>}V_{> k}\}_{k=1}^d$.}

    \textbf{Input: } TT-cores $\{U_1, \ldots U_d\}$, $\{\widetilde{U}_1, \ldots \widetilde{U}_d\}$, variational cores $\{\delta V_1, \ldots, \delta V_d\}$, Euclidean gradient $Z$, observation set $\Omega$ \;

     \For{$k = 1,\ldots, d$}{
        $A_k \leftarrow \text{zeros}(r_{k-1}, n_k, r_k)$ \textit{(cores for computing $(I \otimes X_{<})Z^{<k>}\Xt_{>}$)}\;

        $B_k \leftarrow \text{zeros}(r_{k-1}, n_k, r_k)$ \textit{(cores for computing $(I \otimes V_{<})Z^{<k>}\Xt_{>}$)}\;

        $C_k \leftarrow \text{zeros}(r_{k-1}, n_k, r_k)$ \textit{(cores for computing $(I \otimes X_{<})Z^{<k>}V_{>}$)}\;

     }

     \For{$(i_1, \ldots, i_d) \in \Omega$}{
         (  Precompute left matrix products for $X_<$ and $V_<$)\;

         $U_L\{1\} \leftarrow U_1(i_1)$ \textit{($U_L\{ \}$ and $\delta V_L\{ \}$ are helper variables seperate from input cores)}\; 

         $\delta V_L\{1\} \leftarrow \delta V_1(i_1)$

         \For{$k = 2, \ldots, d-1$}{
             $\delta V_L\{k\} \leftarrow \delta V_L\{k-1\} U_k(i_k) + U_L\{k-1\}\delta V_k(i_k)$\;

             $U_L\{k\} \leftarrow U_L\{k-1\} U_k(i_k)$\;
         }

         \textit{($\Ut_R$, $U_R$, $\delta V_R$ are helper variables)}\;

         $\Ut_R \leftarrow \Ut_d(i_d)$ \;

         $U_R \leftarrow U_d(i_d)$ \;

         $\delta V_R \leftarrow \delta V_d(i_d)$ \;
         
         \textit{(Calculate the cores beginning from the right)}\;

         $A_d(i_d) \leftarrow A_d(i_d) + Z(i_1, \ldots, i_d) \cdot U_L\{d-1\}^\top$\;
         
         $B_d(i_d) \leftarrow B_d(i_d) + Z(i_1, \ldots, i_d) \cdot \delta V_L\{d-1\}^\top$\;
         
         $C_d(i_d) \leftarrow C_d(i_d) + Z(i_1, \ldots, i_d) \cdot U_L\{d-1\}^\top$\;

         \For{$k = d-1,\ldots, 2$} {
            $A_k(i_k) \leftarrow A_k(i_k) + Z(i_1, \ldots, i_d) \cdot U_L\{k-1\}^\top \Ut_R^\top$\;

            $B_k(i_k) \leftarrow B_k(i_k) + Z(i_1, \ldots, i_d) \cdot \delta V_L\{k-1\}^\top \Ut_R^\top$\;

            $C_k(i_k) \leftarrow C_k(i_k) + Z(i_1, \ldots, i_d) \cdot U_L\{k-1\}^\top \delta V_R^\top$\;

            $\Ut_R \leftarrow \Ut_k(i_k) \Ut_R^\top$ \;

            $\delta V_R \leftarrow U_k(i_k) \delta V_R^\top + \delta V_k(i_k) U_R^\top$ \;

            $U_R \leftarrow U_k(i_k) U_R^\top$ \;
         }

         $A_1(i_1) \leftarrow A_1(i_1) + Z(i_1, \ldots, i_d) \cdot \Ut_R^\top$\;

        \textit{($V_{< 1} = 0$, so no update for $B$ here)}\;

         $C_1(i_1) \leftarrow C_1(i_1) + Z(i_1, \ldots, i_d) \cdot U_L\{k-1\}^\top \delta V_R^\top$\;

     }

     \For{$k = 1,\ldots , d$} {
         $(I \otimes X_{< k})Z^{<k>}\Xt_{> k} \leftarrow A_k^L$ \;

         $(I \otimes V_{< k})Z^{<k>}\Xt_{> k} \leftarrow B_k^L$ \;

         $(I \otimes X_{< k})Z^{<k>}V_{> k} \leftarrow C_k^L$ \;
     }
     \caption{Computation of \eqref{comp_essential_1}\eqref{comp_essential_2}\eqref{comp_essential_3} for Tensor Completion (adapted from \citep[Alg. 5.2]{steinlechner2016riemannian})}
\end{algorithm}

\subsection{Computational complexity for the cross terms}
After establishing Corollary \ref{computation_init}, proving the computational complexity for the cross-terms becomes very straight-forward. A fundamental object for the cross terms are the interface matrices for $\P_X^j(Z)$, denoted by $Y^j_{< i}$ and $Y^j_{> i}$. Note that the variational core for $\P_X^j(Z)$ is computable from \eqref{comp_essential_1} by left multiplication of $(I - U_j^L (U_j^L)^\top)$, so, given \eqref{comp_essential_1} has been computed, we can compute all $Y^j_{< i}, Y^j_{> i}$ in $O(dnr^3)$ flops.

For the $j > i, i < d$ case, we can fix $j$ and compute $(Y^j_{< i})^\top \Xt_{>i}$ for all $i < d$ in $O(dnr^3)$ flops by Corollary \ref{computation_init}.\ref{comp_init_2}. We then compute all such matrix products in a total of $O(d^2nr^3)$ flops, and finally the small matrix multiplication of $\delta V_i^L$ is done in $O(nr^3)$ flops each, leading to total complexity of $O(d^2nr^3)$ flops.

The $j > i, i < d$ case follows similarly, where we compute all $(I \otimes V_{< i}^\top)Y^j_{\le i}$ for fixed $j$ in $O(dnr^3)$ flops using an algorithm akin to Algorithm \ref{alg1}. This leads to a total $O(d^2nr^3)$ flops, and finally the remaining $(I - U_k^L(U_k^L)^\top)$ multiplications is done in $O(d^2nr^3)$ flops, leading to total complexity of $O(d^2nr^3)$ flops.

The final case is merely a computation of $(I \otimes V_{< d}^\top)Y^j_{\le d}$, which we have already established can be done in $O(dnr^3)$ flops, leading to total complexity of the entire computation in $O(d^2nr^3)$ flops.

\end{document}